\documentclass[11pt,a4,leqno]{amsart}
\usepackage{amsmath,epsfig,graphicx,color}
\usepackage{comment}
\usepackage{ascmac}
\usepackage{mathrsfs}
\usepackage{setspace}
\usepackage{amsaddr}
\textwidth 6.50in
\topmargin -0.50in 
\oddsidemargin 0in 
\evensidemargin 0in 
\textheight 9.00in 
\definecolor{darkblue}{rgb}{.2, 0.2,.8}
\definecolor{carageen}{rgb}{0,0.5,0.3}
\definecolor{darkred}{rgb}{1, 0,0}
\newcommand{\wlo}{without loss of generality}

\renewcommand{\a}{\alpha}
\renewcommand{\b}{\beta}
\newcommand{\red}{\color{darkred}}
\newcommand{\blue}{\color{darkblue}}
\newcommand{\green}{\color{carageen}}

\newcommand{\clt}{central limit theorem}

\newcommand{\garch}{{\rm GARCH}$(1,1)$}
\newcommand{\spr}{stochastic process}

\newcommand{\ex}{{\rm e}\,}

\newcommand{\asy}{asymptotic}

\newcommand{\ts}{time series}

\newtheorem{lemma}{Lemma}[section]

\newtheorem{theorem}[lemma]{Theorem}

\renewcommand{\P}{{\mathbb P}}
\newtheorem{proposition}[lemma]{Proposition}
\newtheorem{definition}[lemma]{Definition}
\newtheorem{corollary}[lemma]{Corollary}
\newtheorem{example}[lemma]{Example}
\newtheorem{exercise}[lemma]{Exercise}
\newtheorem{remark}[lemma]{Remark}

\newtheorem{tab}[lemma]{Table}

\newcommand{\MC}{Markov chain}

\newcommand{\bth}{\begin{theorem}}
\newcommand{\ethe}{\end{theorem}}
\newcommand{\sv}{stochastic volatility}
\newcommand{\bre}{\begin{remark}\em }
\newcommand{\ere}{\end{remark}}

\newcommand{\ble}{\begin{lemma}}
\newcommand{\ele}{\end{lemma}}
\newcommand{\sre}{stochastic recurrence equation}

\newcommand{\bde}{\begin{definition}}
\newcommand{\ede}{\end{definition}}
\newcommand{\bco}{\begin{corollary}}
\newcommand{\eco}{\end{corollary}}

\newcommand{\bpr}{\begin{proposition}}
\newcommand{\epr}{\end{proposition}}

\newcommand{\bexer}{\begin{exercise}}
\newcommand{\eexer}{\end{exercise}}

\newcommand{\bexam}{\begin{example}\rm }
\newcommand{\eexam}{\end{example}}

\newcommand{\efi}{\end{fig}}

\newcommand{\btab}{\begin{tab}}
\newcommand{\etab}{\end{tab}}

\newcommand{\rv}{random variable}

\newcommand{\sign}{{\rm sign}}

\newcommand{\var}{{\rm var}}

\newcommand{\rhs}{right-hand side}

\newcommand{\beao}{\begin{eqnarray*}}
\newcommand{\eeao}{\end{eqnarray*}\noindent}

\newcommand{\beam}{\begin{eqnarray}}
\newcommand{\eeam}{\end{eqnarray}\noindent}

\newcommand{\beqq}{\begin{equation}}
\newcommand{\eeqq}{\end{equation}\noindent}

\newcommand{\bce}{\begin{center}}
\newcommand{\ece}{\end{center}}

\newcommand{\barr}{\begin{array}}
\newcommand{\earr}{\end{array}}

\newcommand{\stp}{\stackrel{\P}{\rightarrow}}
\newcommand{\std}{\stackrel{d}{\rightarrow}}
\newcommand{\stas}{\stackrel{\rm a.s.}{\rightarrow}}

\newcommand{\stw}{\stackrel{w}{\rightarrow}}

\newcommand{\vague}{\stackrel{\lower0.2ex\hbox{$\scriptscriptstyle
                    \it{v} $}}{\rightarrow}}
\newcommand{\weak}{\stackrel{\lower0.2ex\hbox{$\scriptscriptstyle
                    \it{w} $}}{\rightarrow}}
\newcommand{\what}{\stackrel{\lower0.2ex\hbox{$\scriptscriptstyle
                    \it{\hat{w}} $}}{\rightarrow}}

\newcommand{\bdis}{\begin{displaymath}}
\newcommand{\edis}{\end{displaymath}\noindent}

\newcommand{\R}{\mathbb{R}}

\newcommand{\nto}{n\to\infty}
\newcommand{\mto}{m\to\infty}

\newcommand{\xto}{x\to\infty}

\newcommand{\ov}{\overline}
\newcommand{\wt}{\widetilde}

\newcommand{\vep}{\varepsilon}

\newcommand{\la}{\lambda}

\newcommand{\regvary}{regularly varying}
\newcommand{\slvary}{slowly varying}
\newcommand{\regvar}{regular variation}

\newcommand{\bbr}{{\mathbb R}}

\newcommand{\bbz}{{\mathbb Z}}
\newcommand{\Z}{{\mathbb Z}}

\newcommand{\con}{convergence}

\newcommand{\evt}{extreme value theory}

\newcommand{\st}{such that}
\newcommand{\fif}{if and only if}
\newcommand{\wrt}{with respect to}
\newcommand{\chf}{characteristic function}
\newcommand{\fct}{function}

\newcommand{\ds}{distribution}

\newcommand{\rep}{representation}

\newcommand{\seq}{sequence}

\newcommand{\mgf}{moment generating function}
\newcommand{\ld}{large deviation}

\newcommand{\E }{{\mathbb E}}
\renewcommand{\P }{{\mathbb P}}

\newcommand{\1}{{\mathbf 1}}

\allowdisplaybreaks

\begin{document}
\onehalfspacing
\today
\bibliographystyle{plain}
\title[The Gaussian CLT for a stationary time
series  with infinite variance]{
The Gaussian central limit theorem for a stationary time series with infinite variance}
\thanks{Muneya Matsui's research is partly supported by the JSPS Grant-in-Aid for Scientific Research C
(19K11868).} 

%
\author[Matsui, Mikosch]{Muneya Matsui, Thomas Mikosch}
\address{M. Matsui\\ 
Department of Business Administration, 
Nanzan University\\
18 Yamazato-cho Showa-ku Nagoya, 466-8673, Japan}
\email{mmuneya@gmail.com}
\address{T. Mikosch\\
Department  of Mathematics,
University of Copenhagen \\
Universitetsparken 5,
DK-2100 Copenhagen,
Denmark}
\email{mikosch@math.ku.dk}

\begin{abstract}
We consider a borderline case: the \clt\ for a strictly stationary \ts\
with infinite variance but a Gaussian limit. In the iid
case a well-known sufficient condition for this \clt\
is \regvar\ of the marginal \ds\ with tail index $\a=2$. 
In the dependent case we assume the stronger condition of sequential 
\regvar\ of the 
\ts\ with tail index $\a=2$. We assume that a sample of size $n$
from this \ts\ can be split into $k_n$ blocks of size $r_n\to\infty$ \st\
$r_n/n\to 0$ as $\nto$ and that the block sums are \asy ally independent.
Then we apply classical central limit theory for row-wise iid 
triangular arrays. The necessary and sufficient conditions for such 
independent block sums will be verified by using \ld\ results for the \ts .  
We derive  the \clt\ for $m$-dependent \seq s, linear processes, \sv\ processes and 
solutions to affine \sre s whose marginal \ds s  have infinite variance and are
\regvary\ with tail index $\a=2$.
\end{abstract}
\maketitle
\begin{small}
\noindent
{\em MSC2020 subject classifications:}\,{\rm Primary 60F05;\, Secondary 60E10 60G70 62E20\\
{\em Keywords and phrases:} Regularly varying \ts , Gaussian \clt , infinite variance, linear process, stochastic volatility, affine \sre}
\end{small} 


\section{Introduction}
\subsection{The \clt\ for iid \seq s}
We start with an iid regularly varying (strictly) stationary sequence $(X_t)_{t\in\bbz}$
with generic element $X$ and tail index $\alpha>0$. This means, in particular,
that there exist a \slvary\ \fct\ $L(x)$ and constants $p_\pm\ge 0$ \st
\beam\label{eq:2}
\left\{\barr{cccc}
\P(|X|>x)&=&\dfrac{L(x)}{x^\a}\,,&\qquad x>0\,,\\
\dfrac{\P(\pm X>x)}{\P(|X|>x)}&\to &p_\pm\,, &\qquad \xto\,.
\earr
\right.
\eeam  
\par 
Consider the partial sum process
\beao
S_n=X_1+\cdots +X_n\,,\qquad  n\ge 1\,.
\eeao
We choose normalizing constants as the unique solutions to the equations
\beam\label{eq:3}
n\,\P(|X|>a_n)+ \dfrac{n}{a_n^2} \E[X^2\ \1(|X|\le a_n)]=1\,.
\eeam 
Classical central limit theory yields the following results in the iid case;
see \cite{pruitt:1981}, \\
\cite{gnedenko:kolmogorov:1954}, \cite{petrov:1995}, \cite{ibragimov:linnik:1971}, \cite{feller:1971}:\\[2mm]
{\bf 1.} {\em The case $\a<2$.} Then
\beam\label{eq:1}
\dfrac{S_n-b_n}{a_n}\std \xi_\a\,,\qquad \nto\,,
\eeam  
where $\xi_\a$ has an $\a$-stable \ds , the centering constants can be chosen \st\ 
$b_n=n\,\E[X\,\1(|X|\le a_n)]$. Moreover, $b_n=n\,\E[X]$ for 
$\a\in (1,2)$ and $b_n=0$ for $\a\in (0,1)$ are possible centering constants.\\[1mm]{\bf 2.} {\em The finite-variance case.}
The $\a$-stable \clt\ \eqref{eq:1}
is supplemented
by the classical \clt : if $\sigma^2=\var(X)\in (0,\infty)$, $\E[X]=0$, then \eqref{eq:1}
holds with $b_n=0$, $a_n=\sigma\,\sqrt{n}$ and a standard normal
limit $\xi_2$. The tail behavior of $X$ is not relevant.
We can choose $a_n$ according to \eqref{eq:3} while 
$n\,\P(|X|>a_n)=o(1)$ as $\nto$. The finite-variance case includes certain 
\ds s satisfying 
\eqref{eq:2} for $\a=2$.\\[1mm]
{\bf 3.} {\em The case $\a=2$, $\var(X)=\infty$.} Assume
\eqref{eq:2} and  \wlo\ that $\E[X]=0$.
Then \eqref{eq:3} turns into
\beam\label{eq:3a}
\dfrac{n}{a_n^2}\,L(a_n)+ \dfrac{n}{a_n^2}\,\E[X^2\,\1(|X|\le a_n )]=1\,,
\eeam
and $K(x)=\E[X^2\,\1(|X|\le x)]$, $x>0$, is a 
\slvary\ \fct\ increasing to infinity, and $\P(|X|>x)=o(x^{-2}K(x))$ as $\xto$. Hence we can choose
$a_n= \sqrt{n}\,\ell(n):=\sqrt{n\,K(a_n)}$ where $\ell(x)\to\infty$ as $\xto$ is a \slvary\ \fct . Moreover, the \clt\ with normal limit $\xi_2$ remains valid with the 
unconventional normalizing constants $(a_n)$. As a matter of fact, the necessary
and sufficient condition for the \clt\ with normalization \eqref{eq:3a} is 
slow variation  of $K(x)$. It is implied by \regvar\ of $X$ with index $\a=2$ in the sense of 
\eqref{eq:2} but slow variation of $K(x)$ is slightly more general. In Example~\ref{exam:1} we consider an iid \seq\ $(X_t)$ with marginal tail $\P(X>x)=x^{-2}N(x)$, 
non-\slvary\  $N(x)$ \st\ $\var(X)=\infty$, $K(x)$ is \slvary\  and 
the Gaussian \clt\ holds. 

\subsection{Goals of the paper}
We will study the \clt\ with Gaussian limit  
when $(X_t)$ constitutes a real-valued  {\em \regvary } (strictly) stationary \ts\ with 
index $\a=2$ and $\var(X)=\infty$. 
This means that a generic element $X$ satisfies 
\eqref{eq:2}, and there exist a Pareto$(\a)$ \rv\ $Y_\a$ with tail
$\P(Y_\a>x)=x^{-\a}$, $x>1$, and a  \seq\ of \rv s $(\Theta_t)$ which is independent
of $Y_\a$
\st\ for every $h\ge 0$,
\beam \label{eq:4}
\P\big(x^{-1}( X_{-h},\ldots,X_h)\in \cdot\, \big|\,|X_0|>x \big)
\stw \big(Y_\a\,(\Theta_{-h},\ldots,\Theta_h)\in \cdot\big)\,,\qquad \xto\,.
\eeam
The process $(\Theta_t)_{t\in\bbz}$ is the {\em spectral tail process} of the 
\regvary\ \seq\ $(X_t)$.
\par
The notion of a \regvary\ stationary \ts\ (for general $\a>0$) was introduced in different form 
by \cite{davis:hsing:1995}, and in the present form \eqref{eq:4} by \cite{basrak:segers:2009}. For recent book treatments, see  
\cite{kulik:soulier:2020}, \cite{mikosch:wintenberger:2024}. These references also contain chapters about $\a$-stable central limit theory
in the infinite variance case when $\a<2$, and about \evt\ for general $\a>0$.
\par
Under the conditions $\a=2$ and $\var(X)=\infty$
central limit theory for \ts\ with a Gaussian limit has been treated 
in the literature quite rarely. We are aware of work by \\
\cite{jakubowski:swewczak:2020} who proved the \clt\ for $\a=2$ 
for a\\ \garch\ process. The \regvar\ properties of such processes are well known; see \\ \cite{buraczewski:damek:mikosch:2016} for 
a book treatment. \cite{peligrad:2013}, \cite{peligrad:2022} proved the \clt\ for general 
linear processes with iid and martingale difference noise, 
including the infinite variance case corresponding 
to \regvar\ with index $\a=2$. In this case, the marginal tail of $|X|$ of the form 
$\P(|X|>x)\sim c\,x^{-2}L(x)$, $\xto$, is inherited from the \regvary\ tail of the 
underlying noise; see \cite{mikosch:samorodnitsky:2000}
for best possible conditions under which these results hold 
in the case of iid noise. \cite{jakubowski:swewczak:2020} 
proved the \clt\ for a GARCH(1,1) process by an application
of the \clt\ for martingale differences. \cite{peligrad:2013}, \cite{peligrad:2022}
exploited the structure of the linear process and also used martingale central
limit theory to derive their results. The structure of linear processes with iid 
\regvary\ noise with index $\a<2$ was also exploited by \cite{davis:resnick:1985,davis:resnick:1985a} who proved the 
$\a$-stable \clt\ under more restrictive conditions 
than in \\ \cite{peligrad:2013,peligrad:2022}. 
\cite{philipps:solo:1992} provided 
some general techniques for dealing with \asy\ theory for linear processes. 
\par
In our paper we follow a different path of proof. Assuming mixing conditions,
we split the sample $X_1,\ldots,X_n$ into $k_n\to \infty$ \asy ally independent
blocks of size $r_n=n/k_n\to\infty$. This reduces the \clt\ for $(S_n)$ to a \clt\
for $k_n$ iid block sums of the size $r_n$. Then we can apply 
classical limit theory for row-wise iid triangular arrays. The necessary
and sufficient conditions for these results require 
uniform \ld\ approximations of the 
type $\P(S_{r_n}>a_n\,y)\approx r_n\,\P(X>a_n\,y)$ where $(a_n)$ is defined in \eqref{eq:3}, \eqref{eq:3a}, and $y>0$ is chosen from a suitable region. We refer to 
Theorem~\ref{clt:thm:petrov} below for the necessary and sufficient conditions 
for the \clt\ for row-wise iid triangular arrays. 
Using precise \ld\ bounds in these conditions, we can 
derive the \clt\ $(S_n-\E[X])/a_n\std N(0,\sigma^2)$ in the boundary case $\a=2$ and 
determine the value $\sigma^2$.
\par
The paper is organized as follows. In Section~\ref{sec:prelim}
we introduce the basic mixing condition \eqref{eq:5}
that ensures the \asy\ independence
 of the block sums based on the increasing sample. This condition
is assumed throughout the paper. It is expressed in terms of \chf s and can easily
be verified under classical mixing conditions such as $m$-dependence, strong mixing or $\beta$-mixing with appropriate rates.
\par The basic \clt\ for independent block sums is provided in Theorem~\ref{clt:thm:petrov}. Its conditions require  uniform bounds for the tails of the block sums,
partly hidden in the truncated moments of these sums. The consequent
use of \regvar\ for \ts\ and related \ld\ techniques are 
perhaps the main differences to the literature on the \clt\ 
for \regvary\ \ts\ with tail index $\a=2$;
see \cite{jakubowski:swewczak:2020,peligrad:2013,peligrad:2022} and 
the preceding discussion. An advantage of our approach is its generality: it does not
depend on a particular structure of the \ts . Moreover, under the mixing condition
\eqref{eq:5} the conditions of  Theorem~\ref{clt:thm:petrov} are necessary and sufficient for the \clt\ to hold. The aforementioned literature \cite{jakubowski:swewczak:2020,peligrad:2013,peligrad:2022} depends on the martingale \clt\  when $\a=2$, $\var(X)=\infty$; cf. Theorem 8 in Chapter V of 
\cite{liptser:shiryaev:1986}. Writing $V_n=\sum_{t=1}^nX_t^2$ and assuming $\E[X]=0$, the latter approach also allows
one to derive the joint \con\ of $a_n^{-1}(S_n-b_n,V_n)\std (\xi_2,1)$ and the 
corresponding studentized \clt\ $(S_n-b_n)/V_n\std \xi_2$ as $\nto$. 
\par
In Section~\ref{sec:prelim} we discuss how some of the necessary
conditions for the \clt\ of independent block sums can be verified.
The main difficulty of Theorem~\ref{clt:thm:petrov} is the control
of the \asy\ variance $\sigma^2$ in the \clt : it appears as a scaled  
limit of the 
variance of truncated block sums. In this context the \ld\ results show its  full power.
In Section~\ref{lem:x2} we provide the form of the \asy\ variance in terms 
of the spectral tail process $(\Theta_t)$ for an $m$-dependent process $(X_t)$. 
This is due to a \ld\ result proved in \cite{mikosch:wintenberger:2016}. It depends on the crucial condition 
that $\P(\Theta_{-m}=\cdots=\Theta_{-1}=0)>0$. The derivation of the backward 
spectral tail process is, in general, a hard problem but it is simple
for $m$-dependent linear processes and \sv\ models; see Section~\ref{exam:linear} 
and Example~\ref{exam:sv}, respectively. Regularly varying linear and \sv\ models with 
infinite memory can be approximated by \regvary\ $m$-dependent versions of these
processes; see Sections~\ref{exam:linear} and \ref{sec:svmodel}.  
These approximations work because the \regvar\ of the aforementioned 
processes is caused by their underlying iid \regvary\ noise. 
\par
This is in sharp contrast to the \regvary\ solution of an affine \sre ; see
 \cite{buraczewski:damek:mikosch:2016} for the theory of such processes. Truncations of the infinite series \rep s of these solutions
are not \regvary , in general, and may have all moments. Therefore $m$-dependent approximations to the infinite series do not
work in general. In Sections~\ref{sec:sre1} and \ref{subsec:sre2} we deal with the \clt\ for $\a=2$ by applying precise
\ld\ results obtained in \cite{buraczewski:damek:mikosch:zienkiewicz:2013} in the so-called Kesten-Goldie case and in \cite{konstantinides:mikosch:2005}
in the so-called Grincevi\v cius-Grey case.  These results allow us to determine the \asy\ variance in the corresponding \clt s via Theorem~\ref{clt:thm:petrov}. 
\par
The  proofs of  \clt s in the borderline case $\a=2$ show
that both \evt\ and \ld\ results get into a marriage. For $\a>2$ one can work with the autocorrelation \fct\ of the underlying \ts\ and derive central limit theory with Gaussian
limits. This is in stark contrast with $\a<2$ where \ld s and extremes are
at the heart of the problem for deriving $\a$-stable limit theory; see \cite{mikosch:wintenberger:2013,mikosch:wintenberger:2014,mikosch:wintenberger:2016} and \cite{matsui:mikosch:wintenberger:2024a,matsui:mikosch:wintenberger:2024b}.

\section{Preliminaries}\label{sec:prelim}\setcounter{equation}{0}
In what follows, we consider a real-valued \regvary\ strictly stationary \seq\ $(X_t)$
(in the sense of \eqref{eq:4}) with 
generic element $X$ and index $\a=2$); we will always refer to a {\em stationary} \seq .
We also assume that $X$ has mean zero and 
infinite variance. For the 
definition of sequential \regvar\ we refer to the previous section. 
For ease of presentation, we assume that the block number $k_n=n/r_n$ 
corresponding to the block length $r_n$ is an integer. The cases $n/r_n\in ([k_n], [k_n]+1)$ only lead to small changes in the proofs. 
For any \rv\ $Y$ we write $\varphi_Y(u)=\E[\ex^{i\,u\,Y}]$, $u\in\bbr$, 
for the \chf\ of $Y$.
We assume that a mixing condition in terms of \chf s holds: for every $u\in\bbr$,
\beam\label{eq:5}
\varphi_{S_n/a_n}(u)=\big(\varphi_{S_{r_n}/a_n}(u)\big)^{k_n}+o(1)\,,\qquad \nto\,.
\eeam   
This relation holds under classical conditions 
such as $m$-dependence, strong mixing, assuming suitable 
decay rates of the  mixing coefficients in the latter case; see Lemma~9.1.3
in \\ \cite{mikosch:wintenberger:2024} 
for sufficient conditions for \eqref{eq:5} and Proposition~9.1.6 for coupling 
arguments leading to this condition.
In the $m$-dependent case one can choose $r_n\to\infty$ arbitrarily slowly.
If $(X_t)$ is strongly mixing with mixing coefficients $(\alpha_h)_{h\ge 0}$
then  $\eqref{eq:5}$ is satisfied if an anti-clustering condition holds 
((9.1.7) in \cite{mikosch:wintenberger:2024}) and
one can find \seq s $(r_n)$ and $(\ell_n)$
\st\ $\ell_n\to\infty$, $\ell_n/r_n\to0$ and $k_n\,\a_{\ell_n}\to 0$; see 
Remark~9.1.4 in \cite{mikosch:wintenberger:2024}.

\bexam\label{exam:mixing}
Assume $\a_h\le c\,\rho^h$ for some $c>0$ and $\rho\in (0,1)$. Then we can choose 
$\ell_n=[C\,\log n]$ for some $C>0$ \st\ 
$k_n\,\rho^{\ell_n}\sim k_n\,\rho^{C\,\log\,n}\to 0$. In particular, we can choose $r_n=[(\log n)^{1+\vep}]$ for 
arbitrarily small $\vep>0$, and then both $k_n \alpha_{\ell_n}\to 0$ and $\ell_n/r_n \to 0$ are satisfied. This is in contrast to the examples treated in Sections \ref{sec:sre1}--\ref{subsec:sre2}. Then the magnitude of $r_n$ depends very much on the \slvary\ \fct\ $L$ in $\P(X>x)=L(x)\,x^{-2}$.  
\eexam
Under \eqref{eq:5}, the \clt\ for $(S_n)$ with normalization $(a_n)$ 
holds \fif\ it does
for $(T_n/a_n):=((S_{r_n,1}+\cdots +S_{r_n,k_n})/a_n)$ where $(S_{r_n,i})_{i=1,\ldots,k_n}$
are iid copies of $S_{r_n}$; here and in what follows, 
we always assume that the last (incomplete) block sum can be neglected, i.e., $(S_n-T_n)/a_n\stp 0$ as $\nto$. 
\bth\label{clt:thm:petrov}\cite[Section 4.1, Theorem 4.1]{petrov:1995} 
The \clt\ $T_n/a_n\std N(0,\sigma^2)$ holds for some $\sigma^2>0$ 
\fif\ the following three conditions are satisfied: for every $\vep>0$, as $\nto$,
\beam
k_n \,\P\big(|S_{r_n}/a_n|>1\big) &\to& 0\,,\label{eq:6a}\\
k_n\,\var\big((S_{r_n}/a_n)\,\1(|S_{r_n}/a_n|\le \vep) \big)&\to& \sigma^2\,,\label{eq:6b}\\
k_n \,\E\big[(S_{r_n}/a_n)\,\1(|S_{r_n}/a_n|\le 1) \big]&\to& 0\,.\label{eq:6c}
\eeam
\ethe
\bre\label{rem:1}
Condition \eqref{eq:6a} will follow from \ld\ results of the type
$\P(|S_{r_n}/a_n|>1)\sim c\,r_n\,\P(|X|>a_n)$ for some positive constant $c$ and 
the fact that $n\,\P(|X|>a_n)\to 0$ in the case $\a=2$; see \eqref{eq:7} and comment (2) below.  Conditions \eqref{eq:6a} and 
\eqref{eq:6b} are responsible for the appearance of a Gaussian limit \ds . In particular,
\eqref{eq:6b} is insensitive \wrt\ the value of $\vep$. In the case of an $\a$-stable
limit with $\a<2$, the limit in \eqref{eq:6b} should be proportional to $\vep^{-\a}$.
\ere
In what follows, any positive constant whose value is not of interest will be 
denoted by $c$.
The next result yields some simple sufficient conditions for \eqref{eq:6c} to hold.
\ble\label{lem:6c} 
Assume the aforementioned conditions on the \regvary\ stationary \seq\ $(X_t)$
with index $\a=2$, in particular \eqref{eq:5}, $\E[X]=0$ and $\var(X)=\infty$. Choose $a_n=\sqrt{n}\ell(n)=\sqrt{n\,K(a_n)}$. 
Also assume that one of the 
following two conditions holds.
\begin{enumerate}
\item[\bf 1.]
$S_n$ is symmetric for every $n\ge 1$.
\item[\bf 2.] The following \ld\ result holds for some 
non-negative constant $c_0$ and a \seq\ $s_n>a_n$:
\beam\label{eq:7}
\sup_{y\in [a_n,s_n]}\Big|\dfrac{\P(|S_{r_n}|>y)}{r_n\,\P(|X|>y)}- c_0\Big|\to 0\,,\qquad
\nto\,,
\eeam
and for an arbitrarily small $\delta>0$ there exist positive constants $\gamma$
and $c$ \st\ 
the moment inequality 
\beam\label{eq:8}
\E[|S_n|^{2-\delta}]\le c\,n^{\gamma}\,,\qquad n\ge 1\,,
\eeam
holds. 
Moreover, assume
\beam\label{eq:8a}
\dfrac{n^{1/2}\,r_n^{\gamma-1}}{\ell(n)\,s_n^{1-\delta}}\to 0\,,\qquad \nto\,.
\eeam
\end{enumerate}
Then the relation \eqref{eq:6c} holds.
\ele
\subsection*{Some comments}
\begin{enumerate}
\item
Condition {\bf 1.}  is satisfied for a stationary \sv\ model given by
$X_t=\sigma_t\,Z_t$, $t\in\bbz$, where $(\sigma_t)$ is stationary, 
$(Z_t)$ is iid
symmetric,  $\sigma_t=f(Z_{t-1},Z_{t-2},\ldots)$ for 
some non-negative measurable $f$. 
Then $S_{r_n}$ is symmetric and $\sign(S_{r_n})$, $|S_{r_n}|$ are independent.
This case includes GARCH processes driven by standard normal or $t$-distributed 
noise; see \cite{buraczewski:damek:mikosch:2016} and \cite{mikosch:wintenberger:2024} for further reading on
\sv\ and GARCH models and their properties.
Alternatively, $S_{r_n}$ is symmetric if 
$(\sigma_t)$ and $(Z_t)$ are independent and a generic element
$Z$ is symmetric.
\item
Large deviations of the type \eqref{eq:7} 
can be found in \cite{mikosch:wintenberger:2013,mikosch:wintenberger:2014,mikosch:wintenberger:2016}
for general \regvary\ stationary \seq s, in  
\cite{buraczewski:damek:mikosch:zienkiewicz:2013,buraczewski:damek:mikosch:2016}
and \cite{konstantinides:mikosch:2005} 
for solutions to affine \sre s, and in \cite{mikosch:samorodnitsky:2000} 
for linear processes. These results are in the same spirit as derived for iid \regvary\ \seq s by 
\cite{nagaev:1969a,nagaev:1969,nagaev:1977,nagaev:1979}; see also Appendix F in \cite{mikosch:wintenberger:2024} for a
collection of such results in the iid case.
\item
Inequalities of the type \eqref{eq:8} are known for martingale difference
\seq s $(X_t)$ via the Burkholder-Davis-Gundy inequality
(see \cite{williams:1991}, Section~14.18):
\beao
\E[|S_n|^{2-\delta}]\le c\, \E\Big[\Big(\sum_{i=1}^n X_i^2\Big)^{1-\delta/2}\Big]
 \le c\,n\,\E[|X|^{2-\delta}]\,, 
\eeao
where we used the triangle inequality in $L^{1-\delta/2}$ in the last step.
This inequality remains valid in the situations described in comment (1).
Indeed,
conditioning on $(|X_t|)$, one can apply Khintchine's inequality for sums of weighted iid Rademacher \seq s; see \cite{petrov:1995}, 2.6.18 on p. 82.
A worst case inequality is the triangular 
one: for small $\delta>0$,
\beao
\E[|S_n|^{2-\delta}]
 \le n^{2-\delta}\,\E[|X|^{2-\delta}]\,. 
\eeao
\item
Condition \eqref{eq:8a} is satisfied if $(s_n)$ can be chosen in such a way
that $s_n/n^v\to\infty$ for an arbitrarily large $v>0$. This choice is always possible in the models
mentioned in comment~(2).
\end{enumerate}
\begin{proof} {\bf 1.} The statement is trivial in this case.\\[2mm]
{\bf 2.} Since $\E[S_{r_n}]=0$ and \eqref{eq:7} holds we have 
\beao
&& k_n\, \big|\E\big[(S_{r_n}/a_n)\,\1(|S_{r_n}/a_n|\le 1)\big]\big| \\
&&\le k_n\,\big|\E\big[|S_{r_n}/a_n|\,\1(|S_{r_n}/a_n|> 1)\big]\big|\\
&&=\dfrac{k_n}{a_n}\Big(\int_{a_n}^{s_n}+\int_{s_n}^\infty\Big) 
\P\big(|S_{r_n}|>y\big)\,dy + k_n \P\big(|S_{r_n}|>a_n \big)\\
&&\le  c\,\dfrac{k_n}{a_n} r_n\,\int_{a_n}^{s_n}\P(|X|>y)\,dy
+ \dfrac{k_n}{a_n} \int_{s_n}^\infty \P\big(|S_{r_n}|>y\big)\,dy + k_n \P\big(|S_{r_n}|>a_n \big)\\
&&=:J_1+J_2 + J_3\,.
\eeao
The fact $J_3=o(1)$ follows directly from \eqref{eq:7}.
By Karamata's integral theorem (see\\ \cite{bingham:goldie:teugels:1987}) and the definition of $(a_n)$, $J_1=O(n\,\P(|X|>a_n))=o(1)$. 
For $J_2$ we apply Markov's inequality of
the order $2-\delta$ for an arbitrarily small $\delta>0$:
\beao
J_2&\le& c\,\dfrac{k_n}{a_n}\,  \E[|S_{r_n}|^{2-\delta}]\,
\int_{s_n}^\infty y^{-2+\delta}\,dy\\
&\le &c\, \dfrac{n\,\E[|S_{r_n}|^{2-\delta}]}{r_n\,a_n\,s_n^{1-\delta}}= O\Big( \dfrac{n^{1/2}\,r_n^{\gamma-1}}{\ell(n)\,s_n^{1-\delta}}\Big)\,.
\eeao
In the last step we used the structure of $a_n=\sqrt{n}\,\ell(n)$ and the bound
$\E[|S_{r_n}|^{2-\delta}]\le c\,r_n^\gamma$ for some positive $\gamma$;
see \eqref{eq:8}. The \rhs\ converges to zero by virtue of \eqref{eq:8a}.\\[2mm]
\end{proof}

\section{Examples}\setcounter{equation}{0}
While the conditions 
\eqref{eq:6a} and \eqref{eq:6c} are relatively simple to verify,  
\eqref{eq:6b} calls for more complex arguments. In this section we will
consider several examples where we can show \eqref{eq:6b}.
\subsection{The $m$-dependent case}
We consider an $m$-dependent stationary \seq\ $(X_t)$. In this case, the mixing 
condition \eqref{eq:5} is trivially satisfied and one can choose $r_n\to\infty$
arbitrarily slowly.
 Recall the definition of the spectral
tail process $(\Theta_t)$ for a \regvary\ \seq\ $(X_t)$; see \eqref{eq:4}.
By its definition, the spectral tail process for an $m$-dependent process degenerates, i.e.,
$\Theta_t=0$ for $|t|>m$. 
\ble\label{lem:x2}
Assume $m$-dependence for a
 \regvary\ stationary \seq\ $(X_t)$ with index $\a=2$, infinite variance and mean zero.
Then \eqref{eq:6b} holds with  
\beam\label{eq:sigmasqare}
\sigma^2=\E\Big[\Big( \sum_{ t=0}^m \Theta_t\Big)^2\,\1\big(\Theta_{-i}=0,i=1,\ldots,m\big)\Big].
\eeam
\ele
\begin{proof} Due to $m$-dependence we can choose the block size $r_n\to\infty$ 
arbitrarily small. 
We have for every $\vep>0$ and arbitrarily small $\delta>0$,
\begin{align*}
& k_n\, \E\big[(S_{r_n}/a_n)^2\1\big(|S_{r_n}|\le \vep a_n\big)\big] \\
&= \dfrac{k_n}{a_n^2}\Big(\int_{0}^{r_n^{1+\delta}}+\int_{r_n^{1+\delta}}^{(\vep a_n)^2}\Big)\P\big(|S_{r_n}|>\sqrt{y}\big)\,dy 
- k_n \vep^2\ \P\big( |S_{r_n}/a_n| > \vep\big) \\
&=:I_1+I_2  - I_3\,.
\end{align*}
We start with $I_2$. In the $m$-dependent case we have the following \ld\ result from \cite{mikosch:wintenberger:2016}: 
$\P(|S_{r_n}|>\sqrt{y})\sim \sigma^2\,r_n\,\P(X^2>y)$
uniformly in the  $y$-region $[r_n^{1+\delta},\infty)$, as $\nto$,
where we choose $r_n^{1+\delta}=o(a_n^2)$ and $\sigma^2$ is defined in \eqref{eq:sigmasqare}.
Then we obtain
\beao
I_2\sim \sigma^2\,
\dfrac{n}{a_n^2} \,\int_{r_n^{1+\delta}}^{(\vep a_n)^2} \P(X^2>y) \, dy\,,\qquad\nto\,.
\eeao 
By Karamata's integral theorem (see \cite{bingham:goldie:teugels:1987}) 
and 
the definition of $(a_n)$, 
\beao
I_2&\sim & \sigma^2\,n\,\E\big[(X/a_n)^2\1(r_n^{(1+\delta)/2}<|X|\le \vep a_n) \big]\\
&\sim &\sigma^2\dfrac{ \E[X^2\1(r_n^{(1+\delta)/2}<|X|\le \vep a_n)]}{\E[X^2\1(|X|\le \vep a_n)]}\,,\qquad \nto\,.
\eeao 
The \fct\ $K(x)=\E[X^2\1(|X|\le x)]$, $x>0$, is \slvary\ and converges to infinity
as $\xto$. We choose $r_n\to\infty$ so slowly that $K(r_n^{(1+\delta)/2})/K(\vep a_n)\to 0$. Therefore and by definition of $(a_n)$, $I_2\to\sigma^2$ as $\nto$ for every $\vep>0$.

By the large derivation result we also have
$ I_3 \le c \vep^2 n\, \P(|X|>a_n \vep )\to 0$, $\nto$.
\par
Finally,
$\ell(n)=a_n/\sqrt{n}\to\infty$ and we can choose $r_n$ so small that 
\beao
I_{1}\le\dfrac{k_n}{a_n^2}\,r_n^{1+\delta}=\dfrac{r_n^\delta}{\ell^2(n)}=o(1)\,,\qquad \nto\,.
\eeao
This finishes the proof.
\end{proof}

\bpr\label{prop:cltmdep} Consider an $m$-dependent \regvary\ stationary centered 
\seq\ $(X_t)$ with index $\a=2$ and infinite variance. Then the \clt\
$a_n^{-1}S_n\std N(0,\sigma^2)$ holds as $\nto$, where $\sigma^2$ is given in 
\eqref{eq:sigmasqare}.
\epr
\begin{proof} We will apply Theorem~\ref{clt:thm:petrov}.
Lemma~\ref{lem:x2} proves \eqref{eq:6b} and provides the form of $\sigma^2$.
In the proof of Lemma~\ref{lem:x2} we used the \ld\ result in \cite{mikosch:wintenberger:2016}. It also yields \eqref{eq:6a};
cf. Remark~\ref{rem:1}. Finally, \eqref{eq:6c} follows by an application
of Lemma~\ref{lem:6c},{\bf 2.}, again by the \ld\ result in 
\cite{mikosch:wintenberger:2016} where one can choose $s_n=n^\nu$ for any $\nu>0.5$.
Moreover, $\E[|S_n|^{2-\delta}]\le c\,n^\gamma$ follows by $m$-dependence,
since $\E[X]=0$ and $\E[|S_{n}'|^{2-\delta} ]  \le c\, n\,\E[ |X|^{2-\delta}]$ where $S_n':=\sum_{t=1}^nX_t'$ for iid copies
of $X_1$; see \cite{petrov:1995}, 2.6.20 on p.~82.
\end{proof}
\bexam\label{exam:sv}{\bf An $m$-dependent stochastic volatility model.}
We consider a \regvary\ \sv\ model $X_t=\sigma_t\,Z_t$, $t\in\bbz$,
where $(\sigma_t)$ is a positive $m$-dependent 
stationary process independent of the iid 
\regvary\ centered \seq\ $(Z_t)$ with index $\a=2$. If a generic element $\sigma_0$ has a moment of order $2+\delta$ for some $\delta>0$ then $(X_t)$ is
\regvary\ stationary with index $\a=2$ and spectral tail process $\Theta_t=0$
for $t\ne 0$ and $\P(\Theta_0=\pm 1 )=\lim_{\xto}\P(\pm Z>x)/\P(|Z|>x)$; see \cite{mikosch:wintenberger:2024}, Section~5.3.1.
We can apply Lemma~\ref{lem:x2} and obtain
\beao
\sigma^2 = \E\Big[\Big(\sum_{t=0}^m \Theta_t\Big)^2\,\1(\Theta_{-m}=\cdots=\Theta_{-1}=0) \Big]=\E[\Theta_0^2]=1\,.
\eeao
Breiman's lemma (Lemma 3.1.11 in \cite{mikosch:wintenberger:2024})
yields
\beam\label{eq:an}
a_n^2 &:=&n\,K(a_n)\nonumber\\
&=&n\, \int_0^{a_n^2} \P\big(\sigma_0\,|Z_0|>\sqrt{y}\big)\,dy 
-n\,a_n^2\,\P(|\sigma_0|\,|Z_0|>a_n)\nonumber\\
&\sim &n\,\E[\sigma_0^2]\, \E\big[Z^2\,\1(|Z|\le a_n)\big]\,.
\eeam
Therefore we can choose $a_n=(\E[\sigma_0^2])^{1/2}\sqrt{n\,\E\big[Z^2\,\1(|Z|\le a_n) \big ]}=:(\E[\sigma_0^2])^{1/2}
\,a_n^Z$
where $(a_n^Z)$ is the normalizing \seq\ for the \clt\ for the iid \regvary\ 
\seq\ $(Z_t)$.
\par
The \clt\  $(a_n^Z)^{-1}S_n\std N\big(0,\E[\sigma_0^2]\big)$ will follow from Theorem~\ref{clt:thm:petrov} if we can also show that \eqref{eq:6a} and \eqref{eq:6c} are satisfied.
Condition  \eqref{eq:6a} follows from the \ld\ result for $m$-dependent \regvary\ \seq s in \cite{mikosch:wintenberger:2016}. Since we assume
$\E[Z]=0$  and independence between $(Z_t)$ and $(\sigma_t)$ we can apply 
Khintchine's inequality (see \cite{petrov:1995}, 2.6.18 on p.~82)  
conditionally on $(\sigma_t)$ and derive for $\delta>0$ 
arbitrarily small, some constant $c>0$, 
\beao
\E\big[\E[|S_n|^{2-\delta}\,\big|\,(\sigma_t)\big]\big]&\le& c\,\E[|Z|^{2-\delta}]\,
\E\Big[\sum_{t=1}^n\sigma_t^{2-\delta}\Big]\le c\,n\,.
\eeao
Then we can apply Lemma~\ref{lem:6c},{\bf 2.} since $(s_n)$ can be chosen arbitrarily 
large.
\eexam
\subsection{Linear process}\label{exam:linear} 
In this section we will provide the \clt\ for a causal linear process
\beam\label{eq:infseries}
X_t=\sum_{j=0}^\infty \psi_j\,Z_{t-j}\,,\qquad t\in\bbz\,, 
\eeam
where $(Z_t)$ is an iid \regvary\ \seq\ with index $\a=2$, $\var(Z)=\infty$, 
$\E[Z]=0$, and $(\psi_j)$ is an absolutely summable  \seq\ of real numbers.
Then the infinite series in \eqref{eq:infseries} converges a.s. by virtue of the 
3-series theorem. We assume $\psi_0=1$, and it will be convenient
to write $\psi_j=0$ for $j\le -1$.
\par
The process $(X_t)$ is \regvary\ with index $\a=2$; 
see \cite{mikosch:wintenberger:2024}, Section~5.2.6.
In particular, its spectral tail process is given by
\beam\label{eq:spectal}
\Theta_t= \Theta_Z\,\dfrac{\psi_{J+t}}{|\psi_J|}\,, \qquad t\in\bbz\,,
\eeam
where $\P(\Theta_Z=\pm 1) =\lim_{\xto} \P(\pm Z>x)/\P(|Z|>x)$, $\Theta_Z$ is 
independent of $J$ with \ds\ 
$\P(J=j)=|\psi_j|^2/\|\psi\|_2^2$, $j\in\bbz$,
where $\|\psi\|_2^2:=\sum_{k=0}^\infty\psi_k^2$.
\bpr\label{prop:cltlininf} Under the aforementioned conditions, the
linear process $(X_t)$ satisfies the \clt\ $a_n^{-1} S_n\std 
N(0,\sigma^2)$ where
\beao
\sigma^2:=\Big(\sum_{j=0}^\infty \psi_j\Big)^2/\|\psi\|_2^2\,,
\eeao
and $(a_n)$ is defined in \eqref{eq:3}. Alternatively, replacing $(a_n)$ by the 
corresponding normalization $(a_n^Z)$ for the iid \seq\ $(Z_t)$, we have
$(a_n^{Z})^{-1}S_n\std N(0,\wt \sigma^2)$ where $\wt\sigma^2= \big(\sum_{j=0}^\infty \psi_j\big)^2$. 
\epr
\bre
If $\sum_{j=0}^\infty \psi_j=0$ the Gaussian limit law degenerates to zero. The result of  Proposition~\ref{prop:cltlininf} remains valid for two-sided linear 
processes  $X_t=\sum_{j\in\bbz} \psi_j\,Z_{t-j}$, $t\in\bbz$. Indeed, one can follow the lines of the proof below, first considering 
$X_t^{(m)}=\sum_{|j|\le m} \psi_j\,Z_{t-j}$, $t\in\bbz$, for fixed $m\ge 1$, and then letting $m\to\infty$. 
\ere
\begin{proof} We split the proof into two parts.
First we apply the \clt\ for an $m$-dependent 
linear process and then we extend it to the general case.\\[1mm]
{\em Finite moving averages}.
Consider the process
\beao
X_t=X_t^{(m)}:=\psi_0\,Z_t+\psi_1\,Z_{t-1}+\cdots+ \psi_m\,Z_{t-m}\,,\qquad t\in\bbz\,,\eeao 
with generic element $X^{(m)}$ and 
where $\psi_m\neq 0$. We have  assumed $\psi_0=1$, and we 
write $\psi_j=0$ for $j\in \bbz\backslash \{0,\ldots,m\}$. Then 
the random variable $J$ in 
\eqref{eq:spectal} takes values in  $\{0,\ldots,m\}$.
\par
According to \eqref{eq:spectal} and since $\psi_0=1\ne 0$, we have
\beao
\P\big(\Theta_{-m }=\cdots=\Theta_{-1}=0\big)&=&\sum_{j=0}^m
\dfrac{\psi_j^2}{\|\psi\|_2^2}\, \1\big(\psi_{j-m}=\cdots=\psi_{j-1}=0\big)\\
&=&\dfrac{\psi_0^2}{\|\psi\|_2^2}\, \1\big(\psi_{-m}=\cdots=\psi_{-1}=0\big)=
\dfrac  1 {\|\psi\|_2^2}>0\,. 
\eeao
Hence 
\beam\label{eq:x10}
\sigma^2:=\sigma_m^2&:=& \E\Big[\Big(\sum_{t=0}^m\Theta_t\Big)^2\,\1(\Theta_{-m}=\cdots=\Theta_{-1}=0)\Big]= \dfrac{\big(\sum_{t=0}^{m}\psi_t\big)^2}{\|\psi\|_2^2}
\,. 
\eeam
By Proposition~\ref{prop:cltmdep} 
we have the \clt\ $(a_n^{(m)})^{-1} S_n\std N(0,\sigma_m^2)$ 
where the normalizing \seq\  $(a_n^{(m)})$ is given by \eqref{eq:3} 
for $X=X^{(m)}$.\\[1mm]
Our next goal is to show that the normalizations $(a_n^{(m)})$ for 
$m\le \infty$ are \asy ally equivalent. As before, we write $X$ and $X^{(m)}$ for generic elements in the cases $m=\infty$ and $m<\infty$, respectively.
\par
We have $\P(|X^{(m)}|>x)/\P(|Z|>x)\to \sum_{j=0}^m\psi_j^2$ as $\xto$
 for $m\le \infty$; see (5.2.23) in \cite{mikosch:wintenberger:2024}. Then by Karamata's theorem and uniform \con ,
\beao\lefteqn{
\E\big[(X^{(m)}/x)^2\,\1\big(|X^{(m)}|\le x\big)\big]}\\
&=& x^{-2}\int_0^{ x^2} \dfrac{\P\big(|X^{(m)}|>\sqrt{y}\big)}{\P(|Z|>\sqrt{y})}\,
\P(|Z|>\sqrt{y})\,dy - \dfrac{\P\big(|X^{(m)}|>x \big)}{\P(|Z|>x)}\,\P(|Z|> x) \\ 
&\sim &  \|\psi\|_2^2\,\Big(
x^{-2}\int_0^{ x^2}  \P(|Z|>\sqrt{y})\, dy-\P(|Z|>x)\Big) \\
&=&\|\psi\|_2^2\;\E\big[(Z/x)^2\,\1(|Z|\le x)\big] \,,\qquad\xto\,.
\eeao 
Writing $a_n^{Z}$ for the solution to \eqref{eq:3} in the case $X=Z$, we have
\beao
\dfrac{n}{(a_n^Z)^2}\E\big[(X^{(m)})^2\,\1\big(|X^{(m)}|\le a_n^{Z}\big)\big] 
&\to& \|\psi\|_2^2\,, \qquad\nto\,.
\eeao 
Since $\E\big[(X^{(m)})^2\,\1\big(|X^{(m)}|\le x\big)\big]$, $x>0$, is  a \slvary\ \fct\ this implies that
\beao
\dfrac{n}{(a_n^Z \|\psi\|_2)^2}\,\E\big[(X^{(m)})^2\,\1\big(|X^{(m)}|\le 
a_n^{Z} \|\psi\|_2 \big)\big] \to 1\,,\qquad \nto\,.
\eeao
Since all normalizing constants in the \clt\ are \asy ally equivalent
we can choose $a_n^{(m)}= a_n^{Z} \|\psi\|_2$ both for $m$ finite and infinite
(we do not indicate $m$ in $\|\psi\|_2$).  
\par
In particular, by the first part of the proof we have for $m<\infty$,
\beam\label{eq:cltfin}
(a_n^{(m)})^{-1} S_n\std N\Big(0, \Big(\sum_{j=0}^m\psi_j\Big)^2/\|\psi\|_2^2\Big)\,,\qquad \nto\,.
\eeam
Letting $m\to\infty$ on the \rhs\ we derive the desired Gaussian limit \ds\ in
the case $m=\infty$.\\[1mm]
{\em The case of a causal linear process with infinitely many non-zero $\psi_j$.} 
For any finite $m$ we write $S_n^{(m)}:=\sum_{t=1}^n X_t ^{(m)}$.
We will show that for every $\delta>0$,
\beam\label{eq:billingsley}
\lim_{m\to\infty}\limsup_{\nto}\P\big(\big|S_n-S_n^{(m)}\big| >\delta\, a_n\big)=0\,.
\eeam
This together with \eqref{eq:cltfin} (and the \asy\ equivalence of the 
normalizing \seq s $(a_n)$ and $(a_n^{(m)})$)
will prove that 
\beam\label{eq:infty}
a_n^{-1}S_n\std N\Big(0,\Big( \sum_{j=0}^\infty \psi_j\Big)^2/\|\psi\|_2^2\Big)\,,\qquad \nto\,;
\eeam
see \cite{billingsley:1999}, Theorem~3.2.
By Markov's inequality we have
\beao
\P\big(\big|(S_n-S_n^{(m)})/a_n\big| >\delta \big)&\le& \delta^{-1}\,\E\big[|(S_n-S_n^{(m)})/a_n|\big]\\
&\le &c\,\sum_{j=m+1}^\infty |\psi_j|\,\E \Big[ \Big|\sum_{t=1}^n Z_{t-j}/a_n \Big|\Big ]\,.
\eeao
But the limit
$
\lim_{\nto} \E\big[\big|\sum_{t=1}^n Z_{t}/a_n\big|\big]
$
exists and is finite. Indeed, the \clt\ for the iid \seq\ $(Z_t)$ with 
normalization $a_n$ and normal limit together with 
dominated \con\ ensure that this limit exists.
We conclude that \eqref{eq:billingsley} holds. We have proved \eqref{eq:infty}
and the first statement of Proposition~\ref{prop:cltlininf}. The second statement
about the change of normalization $(a_n)$ by $(a_n^Z)$ is straightforward by the aforementioned discussion.
\end{proof}

\subsection{A \sv\ model}\label{sec:svmodel}
Recall the definition of a \sv\ process from Example~\ref{exam:sv}: 
$X_t=\sigma_t\,Z_t$, $t\in\bbz$, where $(\sigma_t)$ is a positive 
stationary volatility \seq\ with $\E[\sigma_0^{2+\delta}]<\infty$ for some $\delta>0$,
independent of the iid \regvary\ centered noise $(Z_t)$ with index $\a=2$ and 
infinite variance. 
We do not require that the volatilities are $m$-dependent but we assume 
the concrete structure 
\beam\label{eq:formsig}
\sigma_t=\exp\Big(\sum_{j=0}^\infty \psi_j\,\eta_{t-j}\Big)\,,\qquad t\in\bbz\,,
\eeam 
where $(\eta_j)$ is iid standard normally distributed and the real coefficients
$(\psi_j)$ are square-integrable. This model has been studied intensively; see \cite{breidt:davis:1998}.
The so defined \sv\ process $(X_t)$ is
\regvary\ with index $\a=2$ and  has spectral tail process $\Theta_t=0$ for $t\ne 0$, 
$\P(\Theta_0=\pm 1)=\lim_{\xto}\P(\pm Z>x)/\P(|Z|>x)$; see Section~5.3.1 in \\ \cite{mikosch:wintenberger:2024}.
In view of Example~\ref{exam:sv} we have the \clt\ $(a_n^{(m)})^{-1}S_n^{(m)}\std 
N(0,1)$ where $X_t^{(m)}=\sigma_t^{(m)}Z_t$, $\sigma_t^{(m)}$ is given by
 \eqref{eq:formsig} with the convention that $\psi_j=0$ for $j>m$ and 
$a_n^{(m)}= \big(\E[(\sigma_0^{(m)})^2]\big)^{1/2}\,a_n^{Z}$, and $(a_n^Z)$ 
is defined in Example~\ref{exam:sv}. Alternatively, we have
\beao
(a_n^{Z})^{-1} 
S_n^{(m)}\to N\big(0, \E[(\sigma_0^{(m)})^2]\big)= N\Big(0,\exp\big(2\sum_{j=0}^m\psi_j^2\big)\Big)\,,\qquad \nto\,.
\eeao
Letting $m\to\infty$ on the \rhs , we obtain the Gaussian limit for $(S_n/a_n^{Z})$, provided we can show that for every $\gamma>0$,
\beao
\lim_{m\to\infty}\limsup_{\nto}\P\big(|S_n-S_n^{(m)}|>\gamma\,a_n^Z\big)=0\,.
\eeao
This is our next goal.
Write $\ov Z_t=Z_t\,\1(|Z_t|\le a_n^Z)$.
\beao\lefteqn{
\P\big(|S_n-S_n^{(m)}|>\gamma\,a_n^Z\big)}\\&\le& n\,\P(|Z|>a_n^Z)+
\P\Big(\Big|\sum_{t=1}^n (\ov Z_t- \E[\ov Z])\,(\sigma_t-\sigma_t^{(m)})
\Big|>\gamma\,a_n^Z/2\Big)\\&&
+\P\Big(n\,\E[|Z/a_n^Z|\1(|Z|> a_n^Z)]\,\Big|\dfrac{1}{n}\sum_{t=1}^n(\sigma_t-\sigma_t^{(m)})\Big| >\gamma/2\Big)\\
&=:&I_1+I_2+I_3\,.
\eeao  
We have $I_1\to0$ as $\nto$ by definition of $(a_n^Z)$. 
We observe for fixed $m$ that by the ergodic theorem,
\beao
\dfrac{1}{n}\sum_{t=1}^n(\sigma_t-\sigma_t^{(m)})&\stas& 
\E[\sigma_0-\sigma_0^{(m)}]
\,,\qquad \nto\,.
\eeao
Moreover, by Karamata's theorem and the definition of $(a_n^Z)$, 
$n\,\E[|Z/a_n^Z|\1(|Z|> a_n^Z)]\to 0$ as $\nto$. Hence for every $m\ge1$,
$I_3\to 0$ as $\nto$.
To show that $I_2$ is negligible, we condition on $(\eta_t)$ and apply \v Cebyshev's inequality:
\beao
I_2&\le&
c\,n\,\E[ (Z/a_n^Z)^2\,\1(|Z|\le a_n^Z)]\,\E\big[(\sigma_0-\sigma_0^{(m)})^2\big]\,.
\eeao
The first factor is bounded by definition of $(a_n^Z)$ while 
$\E\big[(\sigma_0-\sigma_0^{(m)})^2\big]\to 0$ as $\mto$. 
Thus we have proved the following result. 
\bpr
Consider the \sv\ model $X_t=\sigma_t\,Z_t$, $t\in\bbz$, where $(Z_t)$ is centered 
iid \regvary\ with index $\a=2$ and infinite variance independent
of the log-volatility \seq\ $\log\sigma_t=\sum_{j=0}^\infty \psi_j\,\eta_{t-j}$,
$t\in\bbz$, where $(\psi_j)$ is a square-integrable  \seq\ of real numbers and 
$(\eta_t)$ is iid standard normal. Then the following \clt\ holds
\beam\label{eq:mgfvar}
(a_n^Z)^{-1} S_n\std N\Big(0,\exp\Big(2\,\sum_{j=0}^\infty \psi_j^2\Big)\Big)\,,\qquad \nto\,.
\eeam
\epr  
\bre
The condition of iid standard normal noise $(\eta_t)$ can be relaxed. It suffices that
$\eta_0$ has a finite \mgf\ \st\ $\E[\sigma_0^2]= \prod_{j=0}^\infty
\E[\ex^{2\,\psi_j\,\eta_0}]$ is finite. Then the \asy\ variance in 
\eqref{eq:mgfvar} depends on the concrete form of this \mgf .
\ere
\bre
\par
Consider an iid standard normal \seq\ $(N_t)$ independent of $(X_t)$. 
We observe that the process $\wt X_t=X_t\,N_t=\sigma_t\,(Z_t\,N_t)$, $t\in\bbz$,
is again a \regvary\ stationary \sv\ process with index $\a=2$. Indeed,
$(Z_t\,N_t)$ is iid symmetric and by Breiman's lemma 
(Lemma 3.1.11 in \cite{mikosch:wintenberger:2024}),
\beao
\P(|Z_0\,N_0|>x)\sim \E[N_0^2] \,\P(|Z|>x)=\P(|Z|>x)\,,\qquad \xto\,.
\eeao
Choosing the normalizing constants according to
\eqref{eq:an}, we have $a_n^{-1}\sum_{t=1}^nX_t\,N_t\std N(0,1)$.
According to Lemma~\ref{lem:A1} this relation holds \fif\ 
$a_n^{-2}\sum_{t=1}^nX_t^2=:a_n^{-2} V_n^2 \stp 1$. Thus we also have 
the \clt\ for the studentized sum $S_n/V_n\std N(0,1)$ as $\nto$.
\ere

\subsection{Affine \sre s: the Kesten-Goldie case}\label{sec:sre1}
In contrast to the previous examples (linear process, \sv\ model),
in this section  we will consider 
a stationary \spr\ $(X_t)$ whose \regvar\ property is not caused 
by the iid \regvary\ innovations: the stationary solution to a particular affine \sre .
The marginal tails of this process are almost of Pareto-type, hence the marginal
\ds\ has infinite variance if $\a=2$.
We will show that the \clt\ for this process can be 
explained by its \ld\ properties.
\par
We consider the stationary solution to the \sre
\beam\label{eq:sre1}
X_t=A_t\,X_{t-1}+B_t\,,\qquad t\in\bbz\,,
\eeam  
under the following conditions on the $\bbr_+\times\bbr$-valued iid \seq\ $(A_t,B_t)$, $t\in\bbz$, with generic element $(A,B)$:
\begin{itemize}
\item
The equation $\E[A^\a]=1$ has the solution $\a=2$, $m_2:=\E[A^2\,\log A]<\infty$ and $\E[B^2]<\infty$.
\item The law of $\log A$ conditioned on $\{A>0\}$ is non-arithmetic and for every $x$,  $\P(A\,x+B=x)<1$.
\end{itemize} 
Under these conditions, \eqref{eq:sre1} has a unique solution $(X_t)$ with generic element $X$, the process is \regvary\
with index $\a=2$, and the tails of the marginal \ds\ satisfy the \asy\ relation
\beao
\P(\pm X>x)\sim c_{\pm}\,x^{-2}\,,\qquad \xto\,,
\eeao
where $m_2>0$ and the constants $c_{\pm}$ are given by
\beao
 c_{\pm}:=\E\big[(A\,X+B)_{\pm}^2-(A\,X)_{\pm}^2\big]/(2\,m_2)\,.
\eeao
In particular, since $\E[X]=\E[X]\,\E[A]+\E[B]$, as $\xto$,
\beao
x^2\,\P(|X|>x)
\to c_++c_-&=&
\dfrac{\E\big[(A\,X+B)^2-(A\,X)^2\big]}{2\,m_2}=
\dfrac{\E[B^2+2\,A\,B\,X]}{2\,m_2}\\&=&\dfrac{\E[B^2]\,(1-\E[A])+2\,\E[A\,B]\,\E[B]}{2\,(1-\E[A])\,m_2}=:c_\infty\,.
\eeao
When we choose $B=1$ a.s. the constant $c_\infty$ turns into
\beam\label{eq:c0}
c_0:= \dfrac{1+\E[A]}{1-\E[A]}\,\dfrac{1}{2\,m_2}\,.
\eeam
\par
These results are known as the Kesten-Goldie theorem; see \cite{kesten:1973}, \cite{goldie:1991}; cf. Theorem 2.4.4 in \cite{buraczewski:damek:mikosch:2016}. 
\par
In the following result we assume slightly stronger assumptions on the moments
of $A$ and $B$ as well as suitable mixing conditions. The additional moment
conditions are needed in the \ld\ results of Lemma~\ref{lem:ldsre} below.
\bpr\label{prop:sreclt} Assume the aforementioned conditions on the stationary solution $(X_t)$ to
the affine \sre\ \eqref{eq:sre1}. In addition, we require 
$\E[A^\gamma+|B|^\gamma]<\infty$ for some 
$\gamma>2$ and that $(X_t)$ is strongly mixing with geometric rate. 
Then 
the \clt\ 
$a_n^{-1}(S_n-d_n)\std N(0,c_0)$ holds where $d_n=n\,\E[X]=n\,\E[B]/(1-\E[A])$, 
$(a_n)$ satisfies \eqref{eq:3} and $c_0$ is defined in \eqref{eq:c0}.
\epr
\bre
Since $\P(|X|>x)\sim c_{\infty}\,x^{-2}$ we can choose 
the normalization $a_n=\sqrt{c_\infty\,n\,\log n}$; see the proof of the proposition below.
Then the \clt\ can be reformulated as follows:
\beao
\dfrac{S_n-n\, \E[B] /(1-\E[A])}{ \sqrt{n\,\log n}}\std N(0,c_0\,c_\infty)\,. 
\eeao
\ere
\bre Under mild conditions
the solution $(X_t)$ to an affine \sre\ is $\beta$-mixing with geometric rate,
hence strongly mixing with geometric rate. For example,
the geometric rate is achieved if $A_t=A(Z_t)$, $B_t=B(Z_t)$, $t\in\bbz$,
are polynomials of an iid \seq\ $(Z_t)$, and a generic element of $Z$ 
has a Lebesgue density on some open interval containing zero; 
see \cite{mokkadem:1990}.
For further discussions on this topic we refer to Section~4.2.2 in 
 \cite{buraczewski:damek:mikosch:2016}. We will apply strong mixing  with geometric rate to achieve the mixing condition  \eqref{eq:5} for suitable $(r_n)$.
An alternative way
of verifying \eqref{eq:5} is by using the fact that 
$(X_t)$ is a \MC\ satisfying a contractive \sre . In this case, coupling arguments
can be applied; see Section~9.1.3 
in \cite{mikosch:wintenberger:2024}, in particular the proof
of Theorem~9.3.6,
 where the $\a$-stable \clt\ is derived in the case $\a<2$.
\ere
\bre \cite{jakubowski:swewczak:2020} proved a \clt\ for a stationary \garch\ process $(X_t)$ in the infinite variance case. 
These processes are known to have tails of the form $\P(\pm\,X>x)\sim c_{\pm}\,x^{-\a}$, $\xto$, under mild conditions on the underlying innovations \seq ; see \cite{mikosch:starica:2000}; cf. \cite{buraczewski:damek:mikosch:2016}. The squared volatility process of  $(X_t)$ satisfies an affine 
\sre\ of Kesten-Goldie type, and the tails of $(X_t)$ inherit the power-law tails from the latter process. The results in \\ 
\cite{jakubowski:swewczak:2020} focus on
the case $\a=2$. They use elegant martingale techniques providing the \clt\ for $(S_n/a_n)$ and the law of large numbers\\ $V_n^2=\sum_{t=1}^nX_t^2/a_n^2\stp 1$.
Thus $S_n/V_n\std N(0,1)$ as $\nto$. This approach avoids knowledge of the normalizing constants $a_n$.

\ere
\begin{proof}
We start by observing that the mixing condition \eqref{eq:5} holds, and we can choose
$\ell_n= [C \log n]$ for a suitable $C>0$ and $r_n=[(\log n)^{1+\vep}]$ for arbitrarily small $\vep>0$. 
This 
follows from the strong mixing condition with geometric rate for $(X_t)$; 
see Example~\ref{exam:mixing} for a discussion on the mixing properties of $(X_t)$.
\par
We will apply Theorem~\ref{clt:thm:petrov} and the following \ld\ result in 
the particular case $\a=2$ taken from 
\cite{buraczewski:damek:mikosch:zienkiewicz:2013}, Theorem~2.1. 
\ble\label{lem:ldsre} Under the assumptions of Proposition~\ref{prop:sreclt} we have for $M>2$, 
\beam\label{eq:help0}
 \sup_n \sup_{n^{1/2}(\log n)^M \le x } \dfrac{\P(|S_n-d_n|>x)}{n\,\P(|X|>x)}<\infty\,,
\eeam
and, if $(t_n)$ is chosen \st\
$t_n\ge \log\big(n^{1/2} (\log n)^M\big)$ for large $n$ 
and $\lim_{n\to\infty} t_n/n=0$ then 
\beam\label{eq:help1}
\lim_{n\to\infty} \sup_{n^{1/2}(\log n)^M\le x \le \exp(t_n)} \Big|
\dfrac{\P(|S_n-d_n|>x)}{n\,\P(|X|>x)}-c_0\Big|=0\,.
\eeam
The constant $c_0$ is defined in \eqref{eq:c0}.
\ele
We start by verifying \eqref{eq:6a} in Theorem~\ref{clt:thm:petrov}.
We choose $(r_n)$ \st\ $a_n^2=c_\infty\,n\,\log n>r_n\,(\log r_n)^{2\,M}$ for some $M>2$.
This is always possible since $r_n/n=o(1)$. By \eqref{eq:help0} we have
\beam\label{eq:3x}
k_n\,\P\big(|S_{r_n}-d_{r_n}|>a_n\big)\,\le c\,n\,\P(|X|>a_n)\to 0\,,\qquad \nto\,.
\eeam
\par
Next we verify \eqref{eq:6c}. Since $\E[S_{r_n}-d_{r_n}]=0$ it suffices to
show that the following quantity vanishes as $\nto$: 
\beao
v_n&:=&\dfrac{k_n}{a_n}\,\E\big[\big|S_{r_n}-d_{r_n}\big|\1\big(|S_{r_n}-d_{r_n}|>a_n\big)\big]\\
&=&\dfrac{k_n}{a_n} \int_{a_n}^\infty \P\big(\big|S_{r_n}-d_{r_n}\big|>y\big)\,dy  + k_n\,\P\big(|
S_{r_n}-d_{r_n}|>a_n \big)\,.
\eeao
Due to the large deviation result the second term is bounded by 
$c\,n\,\P(|X|>a_n)\to 0$ as $n\to \infty$. 
By the choice of $(r_n)$ and by virtue of \eqref{eq:help0} we have
\beao
v_n&\le& c\,\dfrac{k_n}{a_n}\,r_n\,\int_{a_n}^\infty \P(|X|>y)\,dy\\
&\le&c\,\dfrac{n}{a_n}\,\int_{a_n}^{\infty} y^{-2}\,dy=O(n/a_n^2)=o(1)\,,\qquad \nto\,.
\eeao
\par
Finally, we prove \eqref{eq:6b}. We have 
\beao
\lefteqn{\dfrac{k_n}{a_n^2} \E\big[(S_{r_n}-d_{r_n})^2\,\1\big(|S_{r_n}-d_{r_n}|\le \vep\,a_n\big)\big]}\\
&=&\dfrac{k_n}{a_n^2}\Big(\int_0^{r_n\,(\log r_n)^{2\,M}} + \int_{r_n\,(\log r_n)^{2\,M}}^{(\vep\,a_n)^2}\Big) \P\big(|S_{r_n}-d_{r_n}|>\sqrt{y}\big)\,dy \\
&& - \vep^2 k_n\P\big(|S_{r_n}-d_{r_n}|>\vep a_n \big) 
=:I_1+I_2-I_3\,.
\eeao
From \eqref{eq:3x} we also have $I_3\to 0$ as $n\to\infty$. 
By assumption, $M>2$. Then we can use the \ld\ result \eqref{eq:help1} to obtain
\beao
I_2&\sim& \dfrac{n}{a_n^2}\,c_0\,
\int_{r_n\,(\log r_n)^{2\,M}}^{(\vep a_n)^2}\P(|X|>\sqrt{y})\,dy\\
&\sim & \dfrac{n}{a_n^2}\,c_0\,c_\infty \log (a_n^2)\to c_0\,,\qquad \nto\,.  
\eeao
We also have by definition of $(r_n)$,
\beao
I_1&\le &  \dfrac{k_n}{a_n^2}\,r_n\,(\log r_n)^{2\,M}\le c\,\dfrac{(\log r_n )^{2\,M}}{\log n}\to 0\,,\qquad \nto\,.
\eeao
\end{proof}
\subsection{Affine \sre s: the Grincevi\v cius-Grey case}\label{subsec:sre2}
The Kesten-Goldie theory is supplemented by the Grincevi\v cius-Grey theory.
Here one considers the stationary 
solution to the affine \sre\ \eqref{eq:sre1} under the conditions
that $(A_t,B_t)$, $t\in\bbz$, is an $\bbr_+^2$-valued iid \seq\ \st\ 
a generic element $(A,B)$ satisfies $\E[A^\a]<1$ and $B$ is \regvary\ with index
$\a>0$. Then $(X_t)$ is \regvary\ with index $\a$; see \cite{mikosch:wintenberger:2024}, Proposition 5.6.10. The stationary solution has \rep\ 
$\sum_{j=-\infty}^t\,B_j\,A_{j+1}\cdots\,A_t$, $t\in\bbz$. The \regvar\
of $(X_t)$ is inherited from the \regvar\ of the iid \seq\ $(B_t)$. This is in stark
contrast to the Kesten-Goldie situation where the \regvar\ of a single 
element $X_t$ (and of the whole \seq ) is due to its infinite series \rep .
In other words, in the latter case the truncated $m$-dependent series 
$X_t^{(m)}=\sum_{j=t-m}^t\,B_j\,A_{j+1}\cdots\,A_t$ is not \regvary\ with index $\a$
while, in the Grincevi\v cius-Grey case, $(X_t^{(m)})$ is \regvary\ due to the 
\regvar\ of $B$. The Grincevi\v cius-Grey case has some similarity with
linear processes where the \regvar\ is inherited from the \regvar\ of the iid
noise. 
\par
In addition to the aforementioned conditions we assume:
\begin{itemize}
 \item $(A_t)$ and  $(B_t)$ are independent.
\item $B$ is regularly varying with index $\alpha=2$, $\E[ A^2]<1$ and $\E[ A^{4}]<\infty$. 
\end{itemize}
Consider the unique solution to the affine \sre\ $C_t=1+A_t\,C_{t-1}$, $t\in\bbz$, given by
\beam\label{def:c}
 C_t =\sum_{j=-\infty}^t A_{j+1}\cdots A_t\,,\qquad t\in \bbz\,,
\eeam
write $C$ for a generic element and $\sigma_x^2:=\var (B\,\1(B\le x))$, $x>0$.
\par
The following large derivation result  is taken from \cite{konstantinides:mikosch:2005}, Theorem~4.2.
\bpr\label{thm:large:derivation:GGcase}
Assume the aforementioned conditions in the Grincevi\v cius-Grey case.
Consider a sequence of positive numbers $(x_n)$ 
such that $n\,\P(B>x_n) \to 0$ and 
\beam\label{eq:large}
&&\lim_{n\to\infty}\sup_{x\ge x_n} \dfrac{\P\Big(\Big(
\sum_{t=1}^n C_t^2\Big)^{1/2} >x/(\sqrt{\log x}\,\sigma_x)
\Big)+\P\Big(\Big|\sum_{t=1}^n (C_t-\E C)\Big|>x \Big)}{n\,\P(B>x)}=0\,. \nonumber\\
\eeam
Then we have 
\beao
 \lim_{n\to\infty}\sup_{x\ge x_n} \Big|
\dfrac{\P(S_n-\E S_n >x)}{n\,\P(B>x)}- \E[ C^{2}] 
\Big|= \lim_{n\to \infty} \sup_{x \ge x_n} \dfrac{\P(S_n-\E S_n \le -x)}{n\,\P(B>x)}=0 \,.
\eeao 
\epr
\bre We observe that $\E[C]=1+\E[C]\,\E[A]$ and 
$\E[C^2]=1+\E[A^2]\,\E[C^2]+2\,\E[A]\,\E[C]$. Hence
\beao
\E[C]=\dfrac{1}{1-\E[A]}\quad \mbox{ and }\quad \E[C^2]=\dfrac 1 {1-\E[A^2]}\,\dfrac{1+\E[A]}{1-\E[A]}\,.
\eeao
\ere
\begin{corollary}
\label{cor:large:deviation:grey}
Assume the conditions of  Proposition~\ref{thm:large:derivation:GGcase} on the 
solution $(X_t)$ to the \sre\ \eqref{eq:sre1}
and
the Kesten-Goldie conditions for the unique solution to the 
\sre\ $C_t=A_t\,C_{t-1}+1$, $t\in\bbz$: there exists $\beta>2$ \st\  $\E[ A^\beta]=1$, 
$\E[A^\beta\,\log A]<\infty$, and the law of $\log A$ conditioned on $\{A>0\}$ is non-arithmetic.
Then the following \ld\ relation holds for every $\delta>0$:
\beam\label{const:cor:large:deviation:gray}
 \lim_{n\to\infty} \sup_{x\ge n^{0.5+\delta}} \Big|
\dfrac{\P(|S_n-\E S_n|>x)}{n\,\P(X>x)}- {\dfrac{1+\E[A]}{1-\E[A]}}
\Big| =0 \,,\qquad \nto\,.
\eeam
\end{corollary}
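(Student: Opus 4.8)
The plan is to deduce \eqref{const:cor:large:deviation:gray} from Proposition~\ref{thm:large:derivation:GGcase} in three moves: (i) choose the admissible \seq\ $x_n=n^{0.5+\delta}$, (ii) pass from the one-sided tails to the two-sided tail, and (iii) convert the normalization $n\,\P(B>x)$ into $n\,\P(X>x)$. The Kesten--Goldie hypotheses on $C_t=A_t\,C_{t-1}+1$ are present precisely to make the abstract condition \eqref{eq:large} checkable.

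First I would record the tail equivalence between $X$ and $B$. The stationary solution has the \rep\ $X_0=\sum_{k\ge0}B_{-k}\,\Pi_k$ with $\Pi_0=1$ and $\Pi_k=A_{-k+1}\cdots A_0$ for $k\ge1$, where $(\Pi_k)$ is independent of the iid \regvary\ \seq\ $(B_{-k})$. Since $B$ is \regvary\ with index $\a=2$ and $\E[\Pi_k^{2+\ep}]=(\E[A^{2+\ep}])^{k}$ with $\E[A^{2+\ep}]<1$ for small $\ep>0$ (by convexity of $s\mapsto\E[A^s]$ between $\E[A^2]<1$ and $\E[A^\beta]=1$), Breiman's lemma applied termwise together with the single-big-jump principle gives
\[
\P(X>x)\sim\Big(\sum_{k\ge0}\E[\Pi_k^2]\Big)\,\P(B>x)=\frac{1}{1-\E[A^2]}\,\P(B>x)\,,\qquad\xto\,,
\]
using $\sum_{k\ge0}\E[\Pi_k^2]=\sum_{k\ge0}(\E[A^2])^k$. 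This diagonal constant differs from $\E[C^2]$, which carries the off-diagonal cross terms.

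Next I would verify that $x_n=n^{0.5+\delta}$ is admissible. The requirement $n\,\P(B>x_n)\to0$ is immediate from $\P(B>x)=x^{-2}L(x)$ with $L$ \slvary , since $n\,\P(B>n^{0.5+\delta})\sim n^{-2\delta}L(n^{0.5+\delta})\to0$. The substantive point is \eqref{eq:large}, and here the Kesten--Goldie theorem makes $C$ \regvary\ with index $\beta>2$, so $\E[C^2]<\infty$ and the processes $(C_t)$, $(C_t^2)$ obey \ld\ estimates of the type \eqref{eq:help0}--\eqref{eq:help1}, now for the index-$\beta$ process (available under the ambient bound $\E[A^4]<\infty$). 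For the centered-sum term one gets $\P(|\sum_{t=1}^n(C_t-\E C)|>x)=O(n\,\P(C>x))$ for $x\ge x_n$, and since $\P(C>x)/\P(B>x)$ is \regvary\ with negative index $-(\beta-2)$ it tends to $0$ uniformly on $x\ge x_n\to\infty$. For the quadratic term, writing $y=x/(\sqrt{\log x}\,\sigma_x)$ with $\sigma_x^2=\var(B\1(B\le x))$ \slvary , one checks $y^2/n\to\infty$ uniformly on $x\ge x_n$, so $\sum_{t=1}^nC_t^2$ is evaluated in its \ld\ regime, $\P((\sum_{t=1}^nC_t^2)^{1/2}>y)=O(n\,\P(C>y))$, and dividing by $n\,\P(B>x)$ again yields a factor behaving like $x^{-(\beta-2)}$ up to \slvary\ and polylogarithmic corrections, hence negligible uniformly. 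This establishes \eqref{eq:large}.

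Finally, Proposition~\ref{thm:large:derivation:GGcase} applies with $x_n=n^{0.5+\delta}$ and gives, uniformly in $x\ge x_n$, that $\P(S_n-\E S_n>x)/(n\,\P(B>x))\to\E[C^2]$ and $\P(S_n-\E S_n\le-x)/(n\,\P(B>x))\to0$; adding the two yields $\P(|S_n-\E S_n|>x)/(n\,\P(B>x))\to\E[C^2]$ uniformly. Replacing $\P(B>x)$ by $(1-\E[A^2])\,\P(X>x)$ through the first step --- the exchange being uniform over $x\ge x_n\to\infty$ by Potter's bounds for the \regvary\ ratio --- produces the limiting constant $\E[C^2]\,(1-\E[A^2])=(1+\E[A])/(1-\E[A])$, by the identity $\E[C^2]=(1-\E[A^2])^{-1}(1+\E[A])/(1-\E[A])$ recorded before the corollary; this is exactly \eqref{const:cor:large:deviation:gray}. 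The main obstacle is the verification of \eqref{eq:large}, and within it the quadratic term: one must locate $y=x/(\sqrt{\log x}\,\sigma_x)$ inside the \ld\ regime of the heavy-tailed sum $\sum_{t=1}^nC_t^2$ uniformly over the unbounded range $x\ge n^{0.5+\delta}$ and show the resulting bound beats $n\,\P(B>x)$, which rests on having sharp index-$\beta$ Kesten--Goldie \ld\ estimates for the $C$-process.
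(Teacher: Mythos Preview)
Your overall strategy matches the paper's: verify \eqref{eq:large} for $x_n=n^{0.5+\delta}$, apply Proposition~\ref{thm:large:derivation:GGcase}, add the one-sided limits, and convert $n\,\P(B>x)$ to $n\,\P(X>x)$ via the tail equivalence $\P(X>x)\sim(1-\E[A^2])^{-1}\P(B>x)$. The centered-sum term in \eqref{eq:large} is handled exactly as in the paper, through the Kesten--Goldie large deviation bound $\P\big(|\sum_{t\le n}(C_t-\E C)|>x\big)=O(n\,\P(C>x))$ together with $\P(C>x)/\P(B>x)\to0$.

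The difference, and the gap, is in the quadratic term. You invoke a large deviation bound $\P\big((\sum_{t=1}^nC_t^2)^{1/2}>y\big)=O(n\,\P(C>y))$ and attribute it to ``estimates of the type \eqref{eq:help0}--\eqref{eq:help1}''. But those estimates are for partial sums of solutions to affine \sre s, and $(C_t^2)$ is \emph{not} such a solution: from $C_t=A_tC_{t-1}+1$ one gets $C_t^2=A_t^2C_{t-1}^2+2A_tC_{t-1}+1$, whose additive piece depends on $C_{t-1}$, so Lemma~\ref{lem:ldsre} does not apply to $\sum C_t^2$, and you have not identified an alternative source for the claimed bound. The paper bypasses this entirely with an elementary moment argument: pick $\gamma\in(2,\beta)$ so that $\E[C^\gamma]<\infty$, use Jensen's inequality to obtain $\E\big[(\sum_{t=1}^nC_t^2)^{\gamma/2}\big]\le n^{\gamma/2}\E[C^\gamma]$, and then apply Markov's inequality together with Potter bounds on the slowly varying factor $(\sqrt{\log x}\,\sigma_x)^\gamma/L_0(x)$ to get a uniform upper bound of order $n^{-\delta(\gamma/2-1)}$. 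This is simpler, avoids any appeal to large deviations for the squared process, and is fully justified under the stated hypotheses.
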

\bre The Kesten-Goldie conditions ensure that $(C_t)$ is strictly stationary and \regvary\ with index $\beta>0$. In particular, 
$\P(C>x)\sim c\,x^{-\beta}$ as $\xto$ where
\beao
 c=\dfrac{\E\big[(1+C\,A)^{\beta}-(C\,A)^\beta\big]}{\beta\, \E[A^\beta\,\log A]}\,.
\eeao
\ere
\begin{proof}
By (5.6.11) in \cite{mikosch:wintenberger:2024},  $X$ and $B$ are tail-equivalent, i.e.,
\beam\label{eq:tail:equiv}
\P(X>x)\sim \P(B>x)/{ (1-\E [A^2])}\,,\qquad  \xto\,.
\eeam 
Therefore it suffices to check the conditions of Proposition~\ref{thm:large:derivation:GGcase}. We choose
$x_n:= n^{0.5+\delta}$. Since $B$ is \regvary\ with index $\a=2$ we have
$n\,\P(B>n^{ 0.5+\delta})\to 0$ as $\nto$.
\par
Lemma~\ref{eq:help0} (Theorem 2.1 in \cite{buraczewski:damek:mikosch:zienkiewicz:2013}) yields a \ld\ result
in the Kesten-Goldie case for the solution to the \sre\ $C_t=A_t\,C_{t-1}+1$: 
for every $\delta>0$,
\beao
 \limsup_{\nto}\sup_{x\ge n^{0.5+\delta}}  
\dfrac{\P\Big(\Big|\sum_{t=1}^n (C_t-\E[ C])\Big|>x \Big)}{n\,\P( C>x)}<\infty. 
\eeao
Since $B$ and $C$ are \regvary\ with indices $2$ and $\b$, respectively, and $2<\b$, 
we have $\lim_{\xto}\P(C>x)/\P(B>x)=0$, hence
\beam\label{eq:largo}
\lim_{\nto}\sup_{x\ge n^{0.5+\delta}}  
\dfrac{\P\Big(\Big|\sum_{t=1}^n (C_t-\E[ C])\Big|>x \Big)}{n\,\P( B>x)}=0\,.
\eeam
This proves that the second part in \eqref{eq:large} vanishes.
\par
 Next we bound the first part
in \eqref{eq:large}. Choose $\gamma\in (2,\beta)$. By Jensen's inequality,
\beao
\E \Big[\Big(\sum_{t=1}^n C_t^2\Big)^{\gamma/2} \Big]
& = &
n^{\gamma/2}\, \E \Big[\Big(\sum_{t=1}^n C_t^2/n\Big)^{\gamma/2}\Big]\le n^{\gamma/2}\, \E \big[C^{\gamma}\big]\,.
\eeao
Then, also observing that  $\P(B>x)=x^{-2}L_0(x)$ for some 
\slvary\ $L_0$, an application of Markov's inequality yields for $x\ge x_n=n^{0.5+\delta}$,
\beao
\lefteqn{\dfrac{\P\Big(\Big(\sum_{t=1}^n C_t^2\Big)^{0.5} > x/(\sqrt{\log x}\,\sigma_x)\Big)}{n\,\P(B>x)}}\\
& \le&c\,(n/x^2)^{\gamma/2-1}\,
\dfrac{ (\sqrt{\log x}\,\sigma_x)^\gamma}{L_0(x)}
\le c\,n^{ -\delta\,(\gamma/2-1)}\,,
\eeao
where in the last step we also applied the Potter bounds (see \cite{bingham:goldie:teugels:1987})
to the \slvary\ \fct\ $ (\sqrt{\log x}\,\sigma_x)^\gamma/L_0(x)$.

Since the \rhs\ converges to zero as $\nto$, \eqref{eq:large} holds.
The conditions of Proposition~\ref{thm:large:derivation:GGcase} are satisfied and  the corollary follows. 
\end{proof}

Now we turn to the \clt\ in the borderline case when 
$X$ is \regvary\ with index $\a=2$ and $\var(X)=\infty$. Since $X$ and $B$ are tail-equivalent (see \eqref{eq:tail:equiv}) this means that $B$ is \regvary\ with index 
$\a=2$ and $\var(B)=\infty$.

\begin{proposition}
\label{prop:main:sre2}
 Assume the conditions of Corollary \ref{cor:large:deviation:grey} on the stationary solution $(X_t)$ to the stochastic recurrence equation \eqref{eq:sre1}
in the case $\a=2$, $\var(B)=\infty$, that $(X_t)$ is strongly mixing 
with geometric rate and \st\ we can choose the block length $r_n$ so small
that $(n/a_n^2)\,r_n^{\delta}=o(1)$ as $n \to \infty$ for some $\delta\in (0,1)$.
Then the central limit theorem $a_n^{-1}(S_n-d_n) \std  N(0,c_0)$ holds where
$(a_n)$ is defined in \eqref{eq:3}, $d_n=n \E[X]=n\,\E[B]/(1-\E[ A])$ and $c_0=(1+\E[A])/(1-\E[A])$.
\end{proposition}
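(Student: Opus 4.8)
The plan is to follow the template of the Kesten-Goldie case (Proposition~\ref{prop:sreclt}), replacing the \ld\ input Lemma~\ref{lem:ldsre} by the Grincevi\v cius-Grey estimate of Corollary~\ref{cor:large:deviation:grey}. As in that proof, strong mixing with geometric rate makes \eqref{eq:5} available through Example~\ref{exam:mixing}, and I fix a block length $(r_n)$ that simultaneously secures \eqref{eq:5} and the standing smallness hypothesis $(n/a_n^2)\,r_n^\delta\to0$; such a choice exists by assumption. Since $X\ge0$ here, $\P(X>x)=\P(|X|>x)$, and because $d_n=n\,\E[X]=k_n\,d_{r_n}$ the deterministic centering factorizes across blocks, so the mixing reduction passes to the centered sums and the \clt\ for $a_n^{-1}(S_n-d_n)$ is equivalent to the \clt\ for $a_n^{-1}\sum_{i=1}^{k_n}(S_{r_n,i}-d_{r_n})$ built from iid copies of the centered block sum $S_{r_n}-d_{r_n}$. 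I would then verify the three conditions of Theorem~\ref{clt:thm:petrov} for this centered array, aiming at $\sigma^2=c_0$.

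Conditions \eqref{eq:6a} and \eqref{eq:6c} are the routine part. Since the hypothesis forces $a_n^2\ge r_n^{1+\delta}$ eventually, the threshold $a_n$ dominates $r_n^{0.5+\delta/2}$, so applying Corollary~\ref{cor:large:deviation:grey} with \ld\ exponent $\delta/2$ gives $\P(|S_{r_n}-d_{r_n}|>a_n)\le c\,r_n\,\P(X>a_n)$, whence $k_n\,\P(|S_{r_n}-d_{r_n}|>a_n)\le c\,n\,\P(X>a_n)\to0$, using $n\,\P(|X|>a_n)\to0$ in the $\a=2$ infinite-variance case; this is \eqref{eq:6a}. For \eqref{eq:6c} I use $\E[S_{r_n}-d_{r_n}]=0$ to dominate the truncated mean by the upper tail integral $a_n^{-1}\int_{a_n}^\infty\P(|S_{r_n}-d_{r_n}|>y)\,dy+\P(|S_{r_n}-d_{r_n}|>a_n)$, apply the \ld\ upper bound and Karamata's theorem ($\int_{a_n}^\infty\P(X>y)\,dy\sim a_n\,\P(X>a_n)$), and again arrive at $O(n\,\P(X>a_n))=o(1)$.

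The core of the argument is \eqref{eq:6b}. Writing $U_n=a_n^{-1}(S_{r_n}-d_{r_n})\,\1(|S_{r_n}-d_{r_n}|\le\vep a_n)$, I would first treat the second moment through the decomposition
\[
k_n\,\E[U_n^2]=\frac{k_n}{a_n^2}\Big(\int_0^{r_n^{1+\delta}}+\int_{r_n^{1+\delta}}^{(\vep a_n)^2}\Big)\P(|S_{r_n}-d_{r_n}|>\sqrt y)\,dy-I_3=:I_1+I_2-I_3,
\]
with $I_3=\vep^2k_n\P(|S_{r_n}-d_{r_n}|>\vep a_n)\to0$ as in \eqref{eq:6a}. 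The low range is controlled by the trivial bound alone: $I_1\le k_na_n^{-2}r_n^{1+\delta}=(n/a_n^2)r_n^\delta\to0$. This is precisely where the extra hypothesis enters, and the reason it is needed here but not in the Kesten-Goldie case: the Grincevi\v cius-Grey \ld\ is only uniform down to the polynomial boundary $x\ge r_n^{0.5+\delta/2}$, forcing the split point $r_n^{1+\delta}$, whereas the finer boundary $r_n^{1/2}(\log r_n)^M$ of Lemma~\ref{lem:ldsre} made the analogous term self-controlling. On the high range $\sqrt y\ge r_n^{0.5+\delta/2}$ the \ld\ applies uniformly, giving $I_2\sim c_0\,(n/a_n^2)\int_{r_n^{1+\delta}}^{(\vep a_n)^2}\P(X>\sqrt y)\,dy$; the Karamata identity $\int_0^{x^2}\P(X>\sqrt y)\,dy\sim K(x)$ then yields $I_2\sim c_0\,(n/a_n^2)\,[K(\vep a_n)-K(r_n^{(1+\delta)/2})]$. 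Since $K$ is \slvary\ one has $K(\vep a_n)\sim K(a_n)$; and since $K(a_n)=a_n^2/n$, the hypothesis $(n/a_n^2)r_n^\delta\to0$ reads $r_n^\delta=o(K(a_n))$, so that $K(r_n^{(1+\delta)/2})=o(r_n^\delta)=o(K(a_n))$ because a \slvary\ \fct\ is dominated by any power. Hence $I_2\to c_0\,(n/a_n^2)K(a_n)=c_0$, and the truncated second moment tends to $c_0$. Passing from the second moment to the variance costs $k_n(\E U_n)^2$, which I bound exactly as in \eqref{eq:6c} by $c\,k_n\,r_n^2\,\P(X>\vep a_n)^2=c\,(r_n/n)\,(n\,\P(X>\vep a_n))^2\to0$; thus \eqref{eq:6b} holds with $\sigma^2=c_0$.

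With \eqref{eq:6a}--\eqref{eq:6c} verified, Theorem~\ref{clt:thm:petrov} delivers $a_n^{-1}\sum_{i=1}^{k_n}(S_{r_n,i}-d_{r_n})\std N(0,c_0)$, and \eqref{eq:5} transfers this to $a_n^{-1}(S_n-d_n)\std N(0,c_0)$. The main obstacle, as indicated, is \eqref{eq:6b}: reconciling the coarse polynomial \ld\ boundary with the \slvary\ truncated-moment \fct\ $K$. The delicate points are to check that the single smallness hypothesis on $(r_n)$ controls both the unreachable low range $I_1$ and the inner truncation $K(r_n^{(1+\delta)/2})$, and to confirm that the \ld\ constant from Corollary~\ref{cor:large:deviation:grey}, combined with the tail equivalence \eqref{eq:tail:equiv}, produces exactly the announced variance $c_0=(1+\E[A])/(1-\E[A])$.
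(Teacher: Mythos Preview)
Your proposal is correct and follows essentially the same route as the paper: verifying \eqref{eq:6a}--\eqref{eq:6c} for the centered block sums via the Grincevi\v cius--Grey large-deviation estimate of Corollary~\ref{cor:large:deviation:grey}, with the same split of the truncated second moment at $r_n^{1+\delta}$ and the same use of the smallness hypothesis $(n/a_n^2)r_n^\delta\to0$ to kill $I_1$ and the inner portion $K(r_n^{(1+\delta)/2})$. If anything you are slightly more careful than the paper, explicitly closing the gap between the truncated second moment and the truncated variance in \eqref{eq:6b}.
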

\bre\label{rem:4x}
Since $\var(B)=\infty$ the \seq\ $(a_n)$ satisfies $a_n=\sqrt{n}\ell(n)$ for a \slvary\ \fct\ $\ell(n)\to\infty$. Therefore the growth condition on $(r_n)$  means that it must be chosen \st\ $r_n^{\delta}/\ell^2(n)\to0$ for an arbitrarily small $\delta>0$.
The required relation
$(n/a_n^2) r_n^\delta=r_n^\delta/\ell^2(n)=o(1)$ as $\nto$ 
between the block lengths $r_n$ and the normalizing constants $a_n$ is delicate. 
For example, for a Pareto-like tail $\P(B>x)\sim c_1\,x^{-2}$  as $\xto$ for some $c_1>0$ 
we have $K(x)=\E[X^2\1(|X|\le x)] \sim c_2 \,\log x$ as $\xto$ for some constant $c_2>0$, hence 
$\ell^2(n)\sim  (c _2\,\log n)/2$. Then, choosing $r_n\sim (\log n)^{(1-\vep) /\delta}$ for small  $\vep>0$, the condition $r_n^\delta/\ell^2(n)=o(1)$ in Proposition~\ref{prop:main:sre2} is satisfied, while the mixing condition \eqref{eq:3a} is kept.
\ere

\begin{proof}
Fix $\delta>0$ \st\ $r_n^{\delta}/\ell^2(n)\to0$. Then the condition
$ \vep a_n > r_n^{(1+\delta)/2}$ is trivially satisfied and we can apply
\eqref{const:cor:large:deviation:gray}: 
\beam\label{eq:4x}
 k_n\,\P \big(|S_{r_n}-d_{r_n}|>a_n\big) \le c\, n\,\P(X>a_n) \to 0,\qquad \nto\,.
\eeam
The \con\ to zero follows from the definition of $(a_n)$ in the case $\a=2$; see the comment after \eqref{eq:3a}. Thus \eqref{eq:6a} holds.
\par
Next we prove \eqref{eq:6c}. 
Since $\E[S_{r_n}-d_{r_n}]=0$ it suffices to show that the following quantity vanishes as $n\to \infty$. 
\beao
 v_n&:=& \dfrac{k_n}{a_n} \E \big[|S_{r_n}-d_{r_n}|\,\1\big(|S_{r_n}-d_{r_n}|> a_n\big)\big] \\
& = &\dfrac{k_n}{a_n} \int_{a_n}^\infty \P\big(
|S_{r_n}-d_{r_n}|>y
\big)\, dy + k_n\, \P\big(|S_{r_n}-d_{r_n}|> a_n\big)\,,
\eeao
where the second term vanishes as $\nto$ by virtue of \eqref{eq:4x}. From
\eqref{const:cor:large:deviation:gray} and  Karamata's theorem we also obtain
\beao
 v_n  \le c\, \dfrac{k_n}{a_n}\, r_n \int_{a_n}^\infty \P( X >y)\,dy  +o(1) \le c\,n\, \P( X >a_n) +o(1)\to 0\,,\qquad \nto\,.
\eeao
\par
Finally, we prove \eqref{eq:6b}. We have for every $\vep>0$, by virtue of \eqref{eq:4x},
\beao
\lefteqn{\dfrac{k_n}{a_n^2}\, \E \big[
 (S_{r_n}-d_{r_n})^2 \,\1\big(|S_{r_n}-d_{r_n}| \le \vep\, a_n\big)
\big] }\\
& =& \dfrac{k_n}{a_n^2}\, \int_0^{(\vep a_n)^2} \P\big(
|S_{r_n}-d_{r_n}|>\sqrt{y}
\big)\, dy {-}   k_n\, \vep^2\, \P\big( |S_{r_n}-d_{r_n}| >\vep\,a_n\big) \\
& =& \dfrac{k_n}{a_n^2} \Big(
\int_{0}^{r_n^{1+\delta}} + \int_{r_n^{1+\delta}}^{(\vep a_n)^2} \Big) \P\big(
|S_{r_n}-d_{r_n}|>\sqrt{y}
\big) \,dy +o(1)\\
& =:& I_1+I_2+o(1)\,.
\eeao
By assumption 
we have chosen $\delta>0$ \st 
\beao
I_1 \le \dfrac{k_n}{a_n^2} r_n^{1+\delta} \le 
\dfrac{r_n^{\delta}}{\ell^2(n)}\to  0\,,\qquad \nto\,.
\eeao
Finally, we bound $I_2$ by an application of   \eqref{const:cor:large:deviation:gray}
\beao
 I_2 &\sim& \dfrac{n}{a_n^2} \,c_0 \,\int_{r_n^{1+\delta}}^{(\vep a_n)^2} \P( X >\sqrt{y})\,dy\,,\qquad\nto\,.
\eeao
We also observe that 
\beao
 \int_0^{r_n^{1+\delta}} \P\big(|X|> \sqrt{y}\big)\,dy 
&=&\E\big[ X^2 \1({ |X|}\le r_n^{(1+\delta)/2})\big]+o(1)\,,\qquad \nto\,.
\eeao
The right-hand \fct\ is \slvary\ with argument $r_n^{(1+\delta)/2}$. Hence it is $O(r_n^\gamma)$ for arbitrarily small $\gamma>0$.
Thus by definition of $(a_n)$, 
\beao
 I_2 &=& c_0\,\dfrac{n}{a_n^2} \, \int_0^{(\vep a_n)^2}\P(X>\sqrt{y})\,dy +o(1)\\
&=& c_0\,\dfrac{n}{a_n^2}\, \E[X^2 \1({ |X|} \le \vep a_n)] +o(1)\to c_0\,,\qquad\nto\,. 
\eeao
This finishes the proof.
\end{proof}

\appendix
\section{}
\ble\label{lem:A1}
Assume that the \regvary\ stationary centered \seq\ $(X_t)$ with index $\a=2$ satisfies the relation
\beam\label{eq:9}
\dfrac{X_1^2+\cdots +X_n^2}{a_n^2}\stp \sigma^2\,,\qquad \nto\,,
\eeam
for some \seq\ $a_n=\sqrt{n}\,\ell(n)$, a \slvary\ \fct\ $\ell$ and a constant $\sigma>0$. 
Then the following relation is equivalent to \eqref{eq:9}:
\beam\label{eq:10}
\dfrac{X_1N_1+\cdots +X_n N_n}{a_n}\std N(0,\sigma^2)\,,\qquad \nto\,, 
\eeam
where $(N_t)$ are  iid standard normal \rv s independent of $(X_t)$.
\ele
\begin{proof}
For any $\la\in \bbr$ we have 
\beao
\E\big[\exp\big(-0.5 \la^2\,(X_1^2+\cdots +X_n^2)/a_n^2\big)\big]
&=& \E\big[\exp\big(i\,\la\,(X_1N_1+\cdots +X_nN_n)/a_n\big)\big]\,.
\eeao
Under \eqref{eq:9} (or \eqref{eq:10}) we have \con\ of the left- and right-hand
expressions to\\ $\exp(-0.5(\sigma\,\la)^2)$. Therefore \eqref{eq:9} and \eqref{eq:10}
are equivalent.
\end{proof}
 
\section{An example of an infinite variance iid \seq\ which satisfies the \clt\ but is not \regvary}\label{sec:append:prelim}\setcounter{equation}{0}
We start by recalling  a classical result; see \cite[Section 4.1, Theorem~4.1]{petrov:1995}.
\bth\label{clt:thm:petrov:iid} 
Let $(X_t)$ be a real-valued  iid sequence and $(a_n)$ be real numbers such that $a_n\to\infty$.
Then 
$S_n/a_n\std N(0,\sigma^2)$ for some $\sigma>0$ if and only if
 the following three conditions are satisfied for every $\vep>0$: as $\nto$,
\beam
n \,\P\big(|X / a_n|>\vep \big) &\to& 0\,,\label{eq:6a:append}\\
n\,\var\big((X /a_n)\,\1(|X /a_n|\le \vep) \big)&\to& \sigma^2\,,\label{eq:6b:append}\\
n \,\E\big[(X /a_n)\,\1(|X /a_n|\le \vep) \big]&\to& 0\,.\label{eq:6c:append}
\eeam
\ethe
We will apply this result to the following example.
\bexam\label{exam:1}

We modify the example in Remark 5 of \cite{damek:mikosch:rosinski:samorodnitsky:2014}, cf. also \\ 
\cite{jacobsen:mikosch:rosinski:samorodnitsky:2009}. 
Choose constants $\theta_0\ne 0$,  $a$ and $b$ \st\ $0<a^2+b^2\le 1$. 
Consider a symmetric random variable $X$  with density, for some $r>0$,
\beao
 f(x) = c_r\, |x|^{-3} \,\big(1+a\, \cos (\theta_0 \log|x|)+b\,\sin (\theta_0 \log |x|) \big)\,\1(|x|>r)\,, \qquad c_r^{-1}=\P(|X|>r)\,.
\eeao
First we observe that $\P(X>x)$ is not regularly varying. For $y>r$ we have 
\beam
 \P(X>y) &
         =& \frac{c_r}{2} \int_0^{y^{-2}}  \,\big(1+a \,\cos \big((\theta_0\, \log z)/2\big)- b\,\sin \big((\theta_0 \log z)/2\big)\big) dz\nonumber \\
        &=& \frac{c_r}{2}\Big[
z + C\, z\, \cos \big((\theta_0\, \log z)/2\big) - D\, z \,\sin \big((\theta_0\, \log z)/2\big)
\Big]^{y^{-2}}_0 \label{eq:quanto}\\
         &=&  \frac{c_r}{2} y^{-2}\, \big(1+C\,\cos (\theta_0\, \log y)+D\, \sin (\theta_0 \,\log y)\big)=:    \frac{c_r}{2}\, y^{-2}\,N(y)\,.\nonumber
\eeam
where $
 C:= (a+\theta_0 b/2)/(1+\theta_0^2/4), D:= (b - \theta_0\,a/2)/(1+\theta_0^2/4)$ are obtained by differentiating 
the quantity in brackets in \eqref{eq:quanto} with respect to $z$ and comparing the coefficients of trigonometric functions.
In particular,  
$0 < C^2 + D^2 =(a^2+b^2)/(1+\theta_0^2/4) \le 1$. The \fct\ $N(x)$ is not
\slvary .
 \par
Next we verify the slow variation of $K(x)$. For $x>r$ we have
\beao
K(x)&=& 2\E[X^2\1(0\le X\le x)]= 2\,\Big(\int_{r^2}^{x^2}\P(X>\sqrt{y})dy -x^2\,\P(X>x )\Big) \\
&=& c_r \int_{r^2}^{x^2} y^{-1} N(\sqrt{y})\,dy- c_r \,N(x)\, \\
&=&  2\,c_r \log x + c_r\, \big( 2C/\theta_0-D\big) \,\sin (\theta_0 \log x) - c_r\, \big(2D/\theta_0 +C \big) 
\cos (\theta_0 \log x) +c \\
&\sim &2\,c_r\,\log x\,,\qquad \xto\,.
\eeao
We choose the normalizing constants $(a_n)$ according to \eqref{eq:3}. Then
it is immediate that $a_n=\sqrt{n}\ell(n)$ for a \slvary\ \fct\ $\ell(n)\to\infty$ 
as $\nto$ and for every $\vep>0$, $n\,\P(|X|>\vep\,a_n)\to 0$ as $\nto$.
Then \eqref{eq:6a:append}, \eqref{eq:6c:append} are satisfied and \eqref{eq:6b:append} holds with 
$\sigma^2=1$. An application of Theorem~\ref{clt:thm:petrov:iid} yields the \clt\
$S_n/a_n\std N(0,1)$ as $\nto$.
\eexam


\begin{thebibliography}{99}
\bibitem[Basrak and Segers (2009)]{basrak:segers:2009}
{\sc Basrak, B. and Segers, J.}\ (2009) Regularly varying
multivariate time series.
 {\em Stoch. Proc. Appl.} {\bf 119},
1055--1080.
\bibitem[Billingsley (1999)]{billingsley:1999}
{\sc Billingsley, P.}\ (1999)
{\em Convergence of Probability Measures.} 2nd edition.
Wiley, New York.
\bibitem[Bingham et al. (1987)]{bingham:goldie:teugels:1987}
{\sc Bingham, N.H., Goldie, C.M.\ and Teugels, J.L.}\ (1987) {\em
Regular Variation.} Cambridge University Press, Cambridge (UK).
\bibitem[Breidt and Davis (1998)]{breidt:davis:1998}
{\sc Breidt, F.J. and Davis, A.R.}\ (1998)
Extremes of \sv\ models. {\em Ann. Appl. Probab.} {\bf 8}, 664--675.
\bibitem[Buraczewski et al. (2016)]{buraczewski:damek:mikosch:2016}
{\sc Buraczewski, D., Damek, E. and Mikosch, T.}\ (2016)
{\em Stochastic Models with Power-Law Tails. The Equation $X=AX+B$.}
Springer, New York.
\bibitem[Buraczewski et al. (2013)]{buraczewski:damek:mikosch:zienkiewicz:2013}
{\sc Buraczewski, D., Damek. E., Mikosch, T. and Zienkiewicz, J.}\ (2013)  
Large deviations for solutions to stochastic recurrence equations under Kesten’s condition.
{\em Ann. Probab.} {\bf 41} 2755--2790. 
\bibitem[Damek et al. (2014)]{damek:mikosch:rosinski:samorodnitsky:2014} 
{\sc Damek, E., Mikosch, T. Rosi\'nski, J. and Samorodnitsky, G.}\ (2014) 
General inverse problems for reglar variation. 
{\em J. Appl. Prob.} {\bf 51A}, 229--248.
\bibitem[Davis and Hsing (1995)]{davis:hsing:1995}
{\sc Davis, R.A. and Hsing, T.}\ (1995)
Point process and partial sum \con\ for weakly dependent
\rv s with infinite variance. {\em Ann. Probab.} {\bf 23}, 879--917.
\bibitem[Davis and Resnick (1985a)]{davis:resnick:1985}
{\sc Davis, R.A.\ and Resnick, S.I.}\ (1985)
Limit theory for moving averages of random variables with regularly varying
tail probabilities.
{\em Ann. Probab.} {\bf 13}, 179--195.
\bibitem[Davis and Resnick (1985b)]{davis:resnick:1985a}
{\sc Davis, R.A.\ and Resnick, S.I.}\ (1985)
More limit theory for the sample correlation function of moving averages.
{\em Stoch. Proc.\ Appl.} {\bf 20}, 257--279.
\bibitem[Embrechts et al. (1997)]{embrechts:kluppelberg:mikosch:1997}
{\sc Embrechts, P., Kl\"uppelberg, C. and Mikosch, T.}\ (1997)
{\em Modelling Extremal Events for Insurance and Finance.}
Springer,  Berlin.
\bibitem[Feller (1971)]{feller:1971}
{\sc Feller, W.}\ (1971) {\em An Introduction to Probability Theory
and Its Applications.} Vol. II. Second edition. Wiley, New York.
\bibitem[Gnedenko and Kolmogorov (1954)]{gnedenko:kolmogorov:1954} 
{\sc Gnedenko, B.V. and Kolmogorov, A.N.}\ (1954)
{\em Limit Distributions for Sums of Independent Random Variables.}
Addison-Wesley, Cambridge.
\bibitem[Goldie (1991)]{goldie:1991}
{\sc Goldie, C.M.}\ (1991)
Implicit renewal theory and tails of solutions of random equations.
{\em Ann. Appl.\ Probab.} {\bf 1}, 126--166.

\bibitem[Ibragimov and Linnik (1971)]{ibragimov:linnik:1971}
{\sc Ibragimov, I.A. and Linnik, Yu.V.}\ (1971)
{\em Independent and Stationary Sequences of Random Variables.} 
Wolters-Noordhoff Publishing, Groningen.
\bibitem[Jacobsen et al. (2009)]{jacobsen:mikosch:rosinski:samorodnitsky:2009}
{\sc Jacobsen, M., Mikosch, T., Rosi\'nski, J. and Samorodnitsky, G.}\ (2009) 
Inverse problems for regular variation of linear filters, a cancellation property for $\sigma$-finite measures and identification of stable laws.
{\em Ann. Appl. Probab.} {\bf 19}, 210--242. 
\bibitem[Jakubowski and Szewczak (2020)]{jakubowski:swewczak:2020}
{\sc Jakubowski, A. and Szewczak, Z.S.}\ (2020) Truncated moments of perpetuities
and a new \clt\ for GARCH processes without Kesten's regularity.
{\em Stoch. Proc. Applic.} {\bf 131}, 151--171. 

\bibitem[Kesten (1973)]{kesten:1973}
{\sc Kesten, H.\ (1973)}
Random difference equations and renewal theory for
products of random matrices.
{\em Acta Math.} {\bf 131}, 207--248.
\bibitem[Kulik and Soulier (2020)]{kulik:soulier:2020}
{\sc Kulik, R. and Soulier, P.}\ (2020)
{\em Heavy-Tailed Time Series.}  Springer, New York.
\bibitem[Konstantinides and Mikosch (2005)]{konstantinides:mikosch:2005}
{\sc Konstantinides, D.G. and Mikosch, T.}\ (2005)  
Large deviations and ruin probabilities for solutions to 
stochastic recurrence equations with heavy-tailed innovations. 
{\em Ann. Probab.} {\bf 33}, 1992--2035.
\bibitem[Liptser and Shiryaev (1986)]{liptser:shiryaev:1986}
{\sc Liptser, R.Sh. and Shiryaev, A.N.}\ (1986)
{\em Theory of Martingales.} (Russian) Nauka, Moscow.
\bibitem[Matsui et al. (2024a)]{matsui:mikosch:wintenberger:2024a}
{\sc Matsui, M., Mikosch, T. and Wintenberger, O.} \ (2024)
Self-normalized partial sums of heavy-tailed time series. Technical report.
\bibitem[Matsui et al. (2024b)]{matsui:mikosch:wintenberger:2024b}
{\sc Matsui, M., Mikosch, T. and Wintenberger, O.} \ (2024)
Moments for self-normalized partial sums. Technical report.
\bibitem[Mikosch and Samorodnitsky (2000)]{mikosch:samorodnitsky:2000}
{\sc Mikosch, T. and Samorodnitsky, G.} \ (2000)
The supremum of a negative drift random walk with 
dependent heavy-tailed steps. {\em Ann. Appl. Probab.} {\bf 10},
1025--1064.
\bibitem[Mikosch and St\u aric\u a (2000)]{mikosch:starica:2000}
{\sc Mikosch, T. and St\u aric\u a, C.}\ (2000)
Limit theory for the sample autocorrelations and extremes
of a GARCH(1,1) process.
{\em Ann. Statist.} {\bf 28},
1427--1451.
\bibitem[Mikosch and Wintenberger (2013)]{mikosch:wintenberger:2013}
{\sc Mikosch, T. and Wintenberger, O.}\ (2013)
Precise large deviations for dependent regularly varying sequences.
{\em Probab. Th. Rel. Fields.} {\bf 156},
851--887.
\bibitem[Mikosch and Wintenberger (2014)]{mikosch:wintenberger:2014}
{\sc Mikosch, T. and Wintenberger, O.}\ (2014)
The cluster index of \regvary\ \seq s with applications to limit theory
for \fct s of multivariate Markov chains.~{\em Probab. Th. Rel. Fields.} {\bf 159}, 157--196.
\bibitem[Mikosch and Wintenberger (2016)]{mikosch:wintenberger:2016}
{\sc Mikosch, T. and Wintenberger, O.}\ (2016)
A large deviations approach to limit theory for heavy-tailed
time series.~{\em Probab. Th. Rel. Fields.} {\bf 166}, 233--269.
\bibitem[Mikosch and Wintenberger (2024)]{mikosch:wintenberger:2024}
{\sc Mikosch, T. and Wintenberger, O.}\ (2024)
{\em Extreme Value Theory for Time Series. Models with Power-Law Tails.}
Springer, New York.
\bibitem[Mokkadem (1990)]{mokkadem:1990}
{\sc Mokkadem, A.}\ (1990) Propri\'et\'es de m\'elange des processus
autor\'egressifs polynomiaux. {\em Ann. Inst. H. Poincar\'e,
  Probab. Statist.} {\bf 26}, 219--260.
\bibitem[Nagaev (1969a)]{nagaev:1969a}
{\sc Nagaev, A.V.}\ (1969) 
Limit theorems for large deviations where Cram\'er's conditions are violated 
(in Russian). 
{\em Izv.\ Akad.\ Nauk UzSSR, Ser.\ Fiz.--Mat.\ Nauk} {\bf 6}, 17--22.
\bibitem[Nagaev (1969b)]{nagaev:1969}
{\sc Nagaev, A.V.}\ (1969)
Integral limit theorems for large deviations when Cram\'er's condition
is not fulfilled~I,II.
{\em Th. Probab. Appl.} {\bf 14}, 51--64 and 193--208.
\bibitem[Nagaev (1977)]{nagaev:1977}
{\sc Nagaev, A.V.}\ (1977)
A property of sums of independent random variables.
{\em Th. Probab. Appl.} {\bf 22}, 335--346.
\bibitem[Nagaev (1979)]{nagaev:1979}
{\sc Nagaev, S.V.}\ (1979)
Large deviations of sums of independent random
variables.
{\em Ann. Probab.} {\bf 7}, 745--789.
\bibitem[Peligrad and Sang (2013)]{peligrad:2013}
{\sc Peligrad, M. and Sang, H.}\ (2013)
Central limit theorem for linear processes with infinite variance.
{\em J. Theor. Probab.} {\bf 26}, 222--239.
\bibitem[Peligrad et al. (2022)]{peligrad:2022}
{\sc Peligrad, M., Sang, H., Xiao, Y. and Yang, G.}\ (2022)
Limit theorems for linear random fields with innovations in the domain
of attraction of a stable law. {\em Stoch. Proc. Appl.} {\bf 150}, 596--621. 
\bibitem[Petrov (1995)]{petrov:1995}
{\sc Petrov,~V.V.} \ (1995) {\em Limit Theorems of Probability
Theory. Sequences of Independent Random Variables.}
Oxford Studies in Probability  {\bf 4.}
Oxford University Press, New York.
\bibitem[Philipps and Solo (1992)]{philipps:solo:1992}
{\sc Philipps. P.C.B. and Solo, V.}\ (1992)
Asymptotics for linear processes. {\em Ann. Statist.}
{\bf 20}, 971--1001.
\bibitem[Pruitt (1981)]{pruitt:1981}
{\sc Pruitt, W.E.}\ (1981) 
The growth of random walks and L\'evy processes.
{\em Ann. Probab.} {\bf 9}, 948--956.
\bibitem[Williams (1991)]{williams:1991}
{\sc Williams, D.}\ (1991)
{\em Probability with Martingales.} Cambridge University Press, Cambridge (UK).
\end{thebibliography}
\end{document}